\documentclass[12pt,a4paper]{article}
\usepackage{graphicx} 
\usepackage{mathrsfs,amssymb,amsfonts,amsthm,amsmath,amsbsy,fullpage,mathtools}
\usepackage{mathtools}
\usepackage{mathabx}
\usepackage{graphicx}
\usepackage{wrapfig}
\usepackage{caption}
\usepackage{subfig}
\usepackage{dsfont}
\usepackage{float}
\usepackage{upgreek}
\usepackage{listings}
\usepackage{lipsum}
\usepackage{bbm, dsfont}

\newtheorem{theorem}{Theorem}[section]

\newtheorem{lemma}[theorem]{Lemma}
\newtheorem{lem}[theorem]{Lemma}

\newtheorem{thm}[theorem]{Theorem}

\newtheorem{rem}[theorem]{Remark}

\newtheorem{dfn}[theorem]{Definition}

\usepackage{comment}

\usepackage{inlinelabel}
\allowdisplaybreaks

\title{Largest-loop-first loop-erased random walk on $\mathbb{Z}^{4}$}
\author{Daisuke  Shiraishi\footnote{Graduate School of Informatics, Kyoto University, shiraishi@acs.i.kyoto-u.ac.jp.} \ \  and \ \  Satomi  Watanabe\footnote{Research Institute for Mathematical Sciences, Kyoto University, swatanab@kurims.kyoto-u.ac.jp.}}
\date{}

\begin{document}

\maketitle

\begin{abstract}
    Let $S = \big(S(n)\big)_{n \ge 0}$ be a simple random walk on $\mathbb{Z}^{d}$ started at the origin. We study a loop-erasing procedure of $S[0,n]$ that differs from Lawler’s chronological loop-erasure. Specifically, we remove loops from $S[0,n]$ in decreasing order of their lengths. The resulting random simple path is called the largest-loop-first (LLF) LERW. For $d=4$, we prove that the expected length of LLF LERW is of the order $n (\log n)^{-1/2 + o(1)}$. In particular, this suggests that chronological LERW and LLF LERW belong to different universality classes. Furthermore, we also prove the convergence of LLF LERW to Brownian motion in four dimensions.
\end{abstract}

\section{Introduction}
Loop-erased random walk (LERW) is a simple random path obtained from a random walk trajectory by removing loops in chronological order. In this paper, we refer to this procedure as chronological loop-erasure and accordingly call the resulting process chronological LERW. For the precise definition of chronological loop-erasure, see Definition \ref{lawler} below. Since its introduction by Lawler in \cite{Lawloop}, it has been shown that the chronological LERW is closely connected to several fundamental probabilistic models in statistical mechanics. For example:

\begin{itemize}

\item The critical dimension of the chronological LERW is four, as is expected for the self-avoiding walk (see  \cite{MadSla} for example).  It is shown in \cite{Lawb} that for $d \ge 4$, the scaling limit of the chronological LERW is the Brownian motion.

\item It is closely related to the uniform spanning tree (\cite{Pema}, \cite{Wils}).

\item In two dimensions, it exhibits conformal invariance and the scaling limit is described by Schramm–Loewner evolution (\cite{Keny}, \cite{LSW}, \cite{LawVik}, \cite{Schr}).

\item Brownian motion can be recovered by adding an independent Brownian loop soup to the scaling limit of the chronological LERW (\cite{LawWer}, \cite{SaShi}). 

\end{itemize}
As demonstrated by the self-avoiding walk, the study of random simple paths is, in general, fraught with difficulties. Among these, however, chronological LERW is one for which relatively detailed properties can be established, and it has been studied extensively. It is worth noting that one can obtain fairly sharp results for the chronological LERW even in three dimensions. For example, the scaling limit of the three-dimensional chronological LERW has been proved to exist not only as a set  (that is, under the topology induced by the Hausdorff distance) in \cite{Koz}, but also as a stochastic process  (that is, under the topology of the uniform convergence norm) in \cite{LiHS} and \cite{LiS}. Convergence in the sense of stochastic processes plays an essential role in \cite{ACHS} in proving the existence of the scaling limit of the three-dimensional uniform spanning tree under a Gromov–Hausdorff–Prokhorov-type topology.

In this paper, we consider a loop-erasing procedure for random walk paths that differs from the chronological one. Specifically, among the loops formed by a random walk trajectory, we remove them successively in decreasing order of their lengths (see Definition \ref{LLFLE} for the precise definition). We refer to this procedure as largest-loop-first (LLF) loop-erasure, and call the resulting random simple path LLF LERW.

Although this type of loop-erasing procedure has been studied on certain special fractals such as  Sierpi\'{n}ski gasket graphs (\cite{Hatt}, \cite{HKN}, \cite{HatMiz}), to our knowledge, LLF LERW on $\mathbb{Z}^{d}$ remains far from being well understood. As stated in \cite{HKN}, the distributions of the chronological LERW and the LLF LERW coincide on Sierpi\'{n}ski gasket graphs (see also \cite{Cao} for related work). In contrast, it is not obvious whether chronological LERW and LLF LERW defined on $\mathbb{Z}^{d}$ belong to the same universality class.
In \cite{Law4d}, it is proved that the expected length of the chronological loop-erasure of $S[0,n]$ is of order $n(\log n)^{-\frac{1}{3}}$ for $d=4$ (for further results on ``mean-field'' behavior in four-dimensional chronological LERW and uniform spanning trees, see also \cite{BenKoz}, \cite{HalHut}, \cite{HutSou}, \cite{Law4d-2}, \cite{Schwe}).
In this paper, we prove that the expected length of LLF LERW on $\mathbb{Z}^{4}$ exhibits a logarithmic correction different from that of chronological LERW (see Theorem \ref{main} below). In particular, this suggests that chronological LERW and LLF LERW belong to different universality classes.

Let $S = \big( S(n) \big)_{n \ge 0}$ be a simple random walk on $\mathbb{Z}^{d}$ started at the origin. Write $L_{n}$ for the length of the LLF loop-erasure of $S[0,n]$ (see \eqref{Ln} for the precise definition). We now state our main result.

\begin{thm}\label{main}

Let $d=4$. As $n \to \infty$, the expectation of $L_{n}$ satisfies

\begin{equation*}
E (L_{n}) = n (\log n)^{-\frac{1}{2} + o(1)} .
\end{equation*}

\end{thm}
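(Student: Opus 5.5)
The approach is to sandwich $E(L_n)$ between two quantities, each governed by the four-dimensional non-intersection exponent of two independent walks. Recall (Lawler) that for $d=4$
\begin{equation*}
P\big(S_1[0,m]\cap S_2(0,m]=\emptyset\big) = (\log m)^{-\frac12+o(1)},
\end{equation*}
where $S_1,S_2$ are independent simple random walks started at the origin. The same estimate holds for the probability that a fixed time $k$ is a cut time of $S[0,n]$, for $\epsilon n\le k\le (1-\epsilon)n$. The exponent $-\frac12$ in Theorem \ref{main} should come exactly from this, as opposed to the $-\frac13$ of \cite{Law4d}, which comes from the escape probability of LERW.

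\textbf{Lower bound.} I will first show deterministically that every cut time of $S[0,n]$ survives the LLF loop-erasure. A cut time is a time $k$ with $S[0,k]\cap S(k,n]=\emptyset$. At each stage of the procedure in Definition \ref{LLFLE}, the current path is obtained from $S[0,n]$ by deleting time intervals. Hence any loop of the current path that straddles the surviving index corresponding to $k$ gives times $i<k<j$ with $S(i)=S(j)$, which contradicts $k$ being a cut time. By induction on the number of erasures, $k$ is never removed. Therefore
\begin{equation*}
E(L_n)\ \ge\ \sum_{\epsilon n\le k\le (1-\epsilon)n} P(k \text{ is a cut time}) \ =\ n(\log n)^{-\frac12+o(1)}.
\end{equation*}

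\textbf{Upper bound.} Here I want to show that survival of $k$ forces an approximate cut event at a scale $m=n^{\delta}$ with $\delta>0$ small. Since $\log m\asymp\log n$, the non-intersection estimate at scale $m$ still gives $(\log n)^{-\frac12+o(1)}$. Concretely, I fix $m=n^{\delta}$ and define the event
\begin{equation*}
A_k=\{S[k-m,k]\cap S(k,k+m]=\emptyset\}.
\end{equation*}
The goal is to prove
\begin{equation*}
P(k \text{ survives},\ A_k^c)\le (\log n)^{-\frac12+o(1)}.
\end{equation*}
On $A_k^c$ there is a short loop $(i,j)$ with $k-m\le i<k<j\le k+m$ and $S(i)=S(j)$. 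If $k$ survives, this loop must be destroyed before its turn, by erasure of some larger loop $(i',j')$ that does not contain $k$. Such a loop must overlap $(i,j)$ only partially, say $i'<i<j'<k$ with $S(i')=S(j')$, where $j'-i'>j-i$. Iterating this, survival of $k$ on $A_k^c$ requires a chain of partially overlapping loops around $k$, each longer than the previous one. I would use this chain to exhibit a ``cut-like'' non-intersection event at the scale of the largest loop in the chain; this is necessary because ``some short loop exists'' alone has probability close to one. More precisely, take the longest loop $(i^*,j^*)$ straddling $k$ in $S[0,n]$. For $k$ to survive, the endpoints of $(i^*,j^*)$ must be cut out by even longer non-straddling loops on both sides. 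I would then show that this forces the two pieces $S[i^*-r,i^*]$ and $S[j^*,j^*+r]$ at a suitable dyadic scale $r$ to avoid each other after an $O(1)$ number of local defects. The probability of that is again $(\log r)^{-\frac12+o(1)}$. Summing over dyadic scales $r\in[n^{\delta},n]$, of which there are $O(\log n)$, should cost only a factor $(\log n)^{o(1)}$, provided the per-scale bounds carry an extra factor that decays in the scale.

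\textbf{Main obstacle.} The hardest step is making the upper-bound reduction rigorous. The LLF procedure is non-local: whether a short loop near $k$ is erased depends on loops of all larger lengths anywhere in the path. So the event ``$k$ survives'' does not obviously imply a clean non-intersection event. I would first try a decomposition by the scale of the largest straddling loop at $k$. Then I would use the separation lemma and up-to-constants estimates for four-dimensional intersection probabilities, with slowly varying logs, to glue the local non-intersection constraints across $O(\log n)$ scales while losing at most $(\log n)^{o(1)}$. If this fails, a fallback is to prove only that survival forces non-intersection of the two halves outside a window of size $n^{o(1)}$ around $k$. That still yields the $(\log n)^{-\frac12+o(1)}$ bound, because the non-intersection probability for walks of length $n$ started at separation $n^{o(1)}$ is comparable, up to a $(\log n)^{o(1)}$ factor, to that for walks started at the same point.
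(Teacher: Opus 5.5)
Your lower bound is correct and is the paper's argument: cut points survive LLF erasure, which is \eqref{CUT-LE}, and $E(\mathcal{C}_n)\asymp n(\log n)^{-1/2}$.

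The upper bound has a genuine gap. The reduction ``$k$ survives $\Rightarrow$ a cut-type non-intersection event'' is the whole difficulty, and your sketch does not supply it.

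\textbf{The event you extract carries no information.} Let $(i^*,j^*)$ be the longest loop straddling $k$. Then $S[0,i^*)\cap S(j^*,n]=\emptyset$ holds automatically, because any such intersection would be a longer straddling loop. Moreover, a straddling loop exists with probability $1-O((\log n)^{-1/2})$, whether or not $k$ survives. So non-intersection of $S[i^*-r,i^*]$ and $S[j^*,j^*+r]$ is not what makes survival costly. Also, destroying $(i^*,j^*)$ needs only one erased loop partially overlapping it on one side, not on both sides.

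\textbf{The real cost of survival is never quantified.} Survival requires that every straddling loop, at every scale, is destroyed by a longer erased loop. Whether a given loop is erased depends on the entire path. You never turn this into a per-scale event carrying the extra decaying factor you need. Without that factor, a bound $(\log r)^{-1/2+o(1)}$ per dyadic scale, summed over $O(\log n)$ scales, gives only $(\log n)^{1/2+o(1)}$.

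\textbf{The fallback fails, both deterministically and quantitatively.} Survival of $k$ does not force $S[0,k-w]\cap S[k+w,n]=\emptyset$ for any small window $w$. Suppose a long loop $(i',j')$ with $j'<k-w$ is erased. Then any intersection $S(i'')=S(j'')$ with $i'<i''<j'$ and $j''>k+w$ is harmless, since the loop $(i'',j'')$ is simply destroyed. Quantitatively, two walks started at separation $s$ avoid each other up to time $n$ with probability of order $(\log s/\log n)^{1/2}$. This is not $(\log n)^{-1/2+o(1)}$ for general $s=n^{o(1)}$; for example, $s=e^{\sqrt{\log n}}$ gives $(\log n)^{-1/4}$.

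\textbf{The missing idea is to argue on average rather than point by point, as the paper does.} Since $\phi$ is slowly varying (Lemma \ref{3-lem2}), Lemma \ref{exactlem} reduces the upper bound to the recursion $\phi(2n)\le\phi(n)\{1-\frac{\log 2}{2\log n}(1-\epsilon)\}$. This in turn follows from $E(L_n+L_n'-L_{2n})\ge(1-C\epsilon)\,n(\log n)^{-1}\phi(n)\log 2$. The LLF-specific input is Lemma \ref{3-lem3}. On $W(1)\cap\cdots\cap W(9)$, the maximal loop $S[u_1,u_2]$ joining the blocks $I^1(j)$ and $I^2(l-j)$ that realize $L^*=l$ is the first loop erased from $S[T,T']$. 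Hence $\mathrm{len}(\mathrm{LE}^*(S[T,T']))\le 4b_n$, and $L_n+L_n'-L_{2n}\ge(1-\epsilon)lb_n\phi(n)-6b_n$. Summing this against $P(W(1))\approx(2l^2\log n)^{-1}$ gives the recursion, with no need to characterize which individual points survive.
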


\begin{rem}
Let ${\cal C}_{n}$ be the number of cut times for $S$ up to time $n$ (see Definition \ref{3-def1} for this). 
Regarding $S[0,n]$  as a random graph whose vertex set consists of the points visited by $S$ up to time $n$, and whose edge set consists of the edges traversed up to time $n$, one can consider the graph distance and the effective resistance on this graph. We write $D_{n}$ and $R_{n}$ for the graph distance and the effective resistance between the origin and $S(n)$ on $S[0, n]$, respectively. Note that 
$$ {\cal C}_{n} \le R_{n} \le D_{n} \le L_{n}. $$

For $d=4$, it is proved in \cite{slowly} that $E ({\cal C}_{n})$ is asymptotic to $c_{1} n (\log n)^{-\frac{1}{2}}$, while it is shown in \cite{DS-SW-4d} that $E (R_{n})$ and $E (D_{n})$ are asymptotic to $c_{2} n (\log n)^{-\frac{1}{2}}$ and $c_{3} n (\log n)^{-\frac{1}{2}}$, respectively.
Here, $0 < c_{1} \le c_{2} \le c_{3} < \infty$ are some positive constants.
    
\end{rem}

Let
$$\gamma = \text{LE}^{\ast} \big( S [0, n] \big) \ \ \text{ and } \ \  \phi (n) = \frac{E (L_{n} )}{n}$$
denote, respectively, the LLF loop-erasure of $S[0,n]$ and the logarithmic correction appearing in the expected length of $\gamma$.
By linear interpolation, we regard $\gamma$ as a continuous $\mathbb{R}^{4}$-valued function defined for all $0 \le t \le L_{n}$. Define the process $X_{n}$ obtained by rescaling $\gamma$ as follows:
\begin{equation}\label{xnt}
    X_{n} (t) = \frac{ 2 \gamma \Big( \min \big\{ n \phi (n) t, L_{n} \big\} \Big) }{\sqrt{n}} \ \ \ \ \text{ for \  $0 \le t \le 1$.}
\end{equation}

The following theorem establishes the convergence of LLF LERW to Brownian motion.

\begin{thm}\label{main2} 
    Let $d=4$. As $n \to \infty$, the law of 
    $\big( X_{n}(t) \big)_{t \in [0,1]} $ converges weakly to the law of a standard Brownian motion $\big( B(t) \big)_{t \in [0,1]}$ in $\mathbb{R}^{4}$ started at the origin (with respect to the topology of uniform convergence on $[0,1]$).
\end{thm}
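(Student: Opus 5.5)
The plan is to follow the strategy Lawler used for chronological LERW in $d\ge4$ (\cite{Lawb}): show that LLF LERW is, up to a deterministic time change, close to $S$ itself. The key structural input is the set of cut times of $S[0,n]$ (Definition \ref{3-def1}). Every cut point of $S[0,n]$ lies on $\gamma=\text{LE}^{\ast}(S[0,n])$ whatever order the loops are erased in. Moreover, since no loop of $S[0,n]$ straddles a cut time, the LLF erasure factorizes across cut times: the portion of $\gamma$ between consecutive cut points $S(t_i)$ and $S(t_{i+1})$ is the LLF erasure of $S[t_i,t_{i+1}]$.

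Let $t_1<t_2<\cdots$ be the cut times and $\sigma(t)$ the index at which $S(t)$ appears on $\gamma$ when $t$ is a cut time. The proof then reduces to two facts.

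\textbf{(a) Approximate linearity of the time change.} I want, for each fixed $s\in[0,1]$, that
\[
\sigma(\lfloor sn\rfloor')/(n\phi(n))\to s
\]
in probability, where $\lfloor sn\rfloor'$ is the first cut time after $sn$. This is a law of large numbers for $L$ restricted to blocks. Split $[0,n]$ into $k$ blocks of length $n/k$. By the factorization, $\sigma$ increments over the blocks are, up to boundary effects, LLF erasures of nearly independent walk pieces. Each has mean about $(n/k)\phi(n/k)$, and $\phi(n/k)/\phi(n)\to1$ because $\phi$ is slowly varying by Theorem \ref{main}.

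To show the sum concentrates, I will bound the variance in the style of \cite{Law4d}. The four-dimensional walk has weakly correlated long-range intersections, so the increments decorrelate. Concretely, I aim for
\[
\operatorname{Var}(L_n)=o\big((n\phi(n))^2\big),
\]
obtained by comparing $L_n$ with a sum over local blocks. The boundary effects need separate control: a cut time occurs within $o(n/k)$ of each block endpoint with high probability, since cut times have density $(\log n)^{-1/2}$, much larger than $1/n$.

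\textbf{(b) Tightness and the uniform bound.} Cut times are dense enough that $\sup_{t\le n}|S(t)-S(t')|=o(\sqrt n)$, where $t'$ is the nearest cut time. Between consecutive cut points, $\gamma$ stays inside $S[t_i,t_{i+1}]$, whose diameter is $o(\sqrt n)$ uniformly.

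Combining these, $X_n(t)$ is uniformly close to $2S(\sigma^{-1}(tn\phi(n)))/\sqrt n$. By (a) and monotonicity of $\sigma$, the upgrade from pointwise to uniform convergence is a Dini-type argument, so this equals $2S(tn)/\sqrt n+o(1)$ uniformly. The claim then follows from Donsker's theorem, since the $d=4$ walk has covariance $I/4$ and the factor $2$ normalizes it.

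The main obstacle is step (a), specifically the concentration of $L$ over blocks, since only the expectation asymptotics of Theorem \ref{main} are available. I would first try a second-moment estimate, using that $L$ over a block essentially depends on the walk within a $(\log n)^{C}$-enlarged window. This fails only on events where a loop of macroscopic length straddles the block boundary, and the probability of such events is controlled by four-dimensional intersection estimates, of order $(\log n)^{-1/2}$.
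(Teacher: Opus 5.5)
Your architecture matches the paper's: cut times of $S[0,n]$ near the points of a time grid, the factorization \eqref{CUT-LE}, concentration of the LLF length of each block, and the modulus of continuity of $S$.

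The step you call the main obstacle is already proved in Section 3 as \eqref{3-lem1-1}, sharpened in \eqref{diffest}. Slow variation of $\phi$ is Lemma~\ref{3-lem2}. It does not follow from Theorem~\ref{main}, since knowing $\phi(n)=(\log n)^{-1/2+o(1)}$ does not force $\phi(n/k)/\phi(n)\to 1$.

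Your own sketch of the concentration would not close as written, for two reasons.

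First, a positive mean density of cut times does not exclude long gaps between them, because cut times cluster. So neither the block comparison nor your uniform step (b) follows from density. The needed input is Lemma 7.7.4 of \cite{Lawb}: a given window of length $n(\log n)^{-6}$ contains no cut time with probability $O\big((\log n)^{-1}\log\log n\big)$.

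Second, this rate matters. The trivial bound $L_{\mathrm{block}}^{2}\le (n/k)L_{\mathrm{block}}$ yields $\mathrm{Var}=o\big((n\phi(n))^{2}\big)$ only if the number of blocks satisfies $k\phi(n)\to\infty$. The paper takes $k=(\log n)^{2}$ and uses that the block lengths are exactly independent, so no \cite{Law4d}-style decorrelation is needed. With that many blocks, every boundary lacking nearby cut times costs up to a block length. The expected comparison error is then of order $np$, where $p$ is the per-boundary failure probability. This must be $o(n\phi(n))$, even though $\phi(n)$ may be as small as $c(\log n)^{-1/2}$. Your $p\asymp(\log n)^{-1/2}$ (the order of the probability that a \emph{given} time is a cut time) would therefore not suffice; Lawler's $p=O(\log\log n/\log n)$ does. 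For the same reason, the boundary windows must have total length $o(n\phi(n))$, not merely be $o(n/k)$ each.

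Granting these inputs, your endgame is a valid alternative to the paper's. You fix the grid and get $\sigma/(n\phi(n))\to\mathrm{id}$ pointwise in probability from \eqref{3-lem1-1} with fixed $\epsilon$, using cut times within $n(\log n)^{-6}$ of $sn$ and Lemma~\ref{3-lem2}. You then upgrade to uniform convergence by monotonicity and conclude with Donsker's theorem.

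The paper instead lets the number of blocks grow, $K_n=(\log n)^{1/8}$, and applies \eqref{diffest} with $\epsilon=(\log n)^{-1/4}$ to all blocks at once, at a union-bound cost of $(\log n)^{-1/8}\log\log n$. It then uses the strong coupling of Lemma~3.1 of \cite{Lawcut} to get $\max_{t}|X_n(t)-B(t)|\le C(\log n)^{-1/32}$ directly on $V\cap F\cap W$. These events say that there are cut times near every grid point, that all block lengths are concentrated, and that $S$ oscillates little over time lags $e_n$.

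Your route needs only fixed-$\epsilon$ concentration and no coupling. The paper's route needs the quantitative bound \eqref{diffest}, but it dispenses with the monotonicity argument and gives an explicit rate.
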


We make some remarks here regarding the proof of Theorem \ref{main}. The proof of Theorem \ref{main} follows the same strategy as the proof of Theorem 1.2.3 in \cite{exact} and that of Theorem 1.1 in \cite{DS-SW-4d}. Namely, to prove Theorem \ref{main}, it is sufficient to verify the following two conditions:

\begin{itemize}

\item[(A1)] $\phi(n)$ is slowly varying;

\item[(A2)] for all $\epsilon \in (0,1)$ there exists $N = N_{\epsilon} \ge 1$ such that for every $n \ge N$ we have

\begin{equation}\label{ass2}
\phi (2n) \le \phi (n) \Big\{ 1 - \frac{\log 2}{2 \log n} (1 - \epsilon ) \Big\}. 
\end{equation}

\end{itemize}
See Lemma \ref{exactlem} for this.

In \cite{exact} and \cite{DS-SW-4d}, checking the inequality \eqref{ass2} for $E(D_{n})/n$ and $E(R_{n})/n$ relies essentially on the fact that the graph distance and the effective resistance are metrics. 
The main difficulty in proving Theorem \ref{main} is that the length of the LLF loop-erasure does not satisfy the properties of a metric, and therefore special care is required in establishing \eqref{ass2}. Indeed, the logarithmic correction of the expected length of the chronological loop-erasure of $S[0,n]$ is slowly varying, but it does not satisfy the inequality \eqref{ass2}. In the present paper, this difficulty is overcome---particularly in the proof of Lemma \ref{3-lem3}---by exploiting specific features of the LLF loop-erasure, thereby allowing us to establish the inequality \eqref{ass2}.

For $d=2,3$, the study of LLF LERW remains largely unexplored. Since LLF LERW does not satisfy a domain Markov property in a simple form comparable to that of chronological LERW, this could pose an obstacle to further analysis. On the other hand, LLF loop-erasure also has an advantage that chronological loop-erasure does not share. Namely, LLF loop-erasure can be defined directly for trajectories of Brownian motion, although the LLF loop-erasing procedure for Brownian motion requires infinitely many steps to terminate.

In the cases $d=2,3$, one may consider, for example, the following problems.

\begin{itemize}

\item Prove the existence of a growth exponent $\alpha_d$ such that

$$E(L_{n}) = n^{\alpha_{d} + o(1)},$$

and, if possible, determine the value of $\alpha_d$. 
\begin{itemize}
     \item For the chronological LERW, the value of the growth exponent is $5/8$ in two dimensions (\cite{Keny}).

     \item The existence of the growth exponent has been established for the chronological LERW in three dimensions (\cite{growth}), while the value of the exponent is not known. It has been proved that the exponent is strictly larger than $1/2$ and at most $5/6$ (\cite{Law99}).
\end{itemize}

\item Compare $\alpha_d$ with the growth exponent of chronological LERW. Furthermore, compare the orders of $E(L_n)$ and $E(D_n)$. In fact, this problem motivated our consideration of LLF LERW.

 \end{itemize}
We do not address these problems in this paper and leave them for future work.

The organization of this paper is as follows. In Section 2, we introduce the notation and definitions used throughout the paper. We prove Theorem \ref{main} in Section 3 and Theorem \ref{main2} in Section 4.

\subsection*{Acknowledgments}
DS was supported by JSPS Grant-in-Aid for Scientific Research (C) 22K03336, JSPS Grant-in-Aid for Scientific Research (B) 22H01128
and 21H00989. SW was supported by JSPS Grant-in-Aid for Research Activity Start-up 25K23335.

\section{Notation and preliminaries}
In this section, we introduce the notation and definitions used throughout this paper. In particular, in Definition \ref{LLFLE}, we define LLF loop-erasure. This is a method of erasing loops that is distinct from the chronological loop-erasure introduced by Lawler in \cite{Lawloop}.

We use $\mathbb{Z}^d$ to denote the $d$-dimensional integer lattice and $\mathbb{R}^d$ for the $d$-dimensional Euclidean space. For any $x \in \mathbb{R}^d$, let $|x|$ represent the Euclidean distance from $x$ to the origin. In this paper, we restrict attention to the case $d = 4$.

Let $S$ be a simple random walk on $\mathbb{Z}^{d}$, and denote by $S(j)$ its position at time $j \ge 0$. When the random walk starts from $x$, i.e., $S(0) = x$, we write $P^x$ and $E^x$ for its law and expectation, respectively. For convenience, we do not explicitly use $\lfloor r \rfloor$ even when $r \ge 0$ is not an integer. In such cases, expressions like $S(r)$ are understood to mean $S(\lfloor r \rfloor)$.

Throughout the paper, $c$, $C$, $c_1$, $c_{2}$, and so on denote positive constants whose values may vary from one occurrence to another. To indicate the dependence of a constant on a parameter $\epsilon$, we write $c_{\epsilon}$ or $C_{\epsilon}$. Let $\{a_n \}_{n \ge 1}$ be a sequence that satisfies $a_n > 0$ for all $n \ge 1$. For another sequence $\{b_n \}_{n \ge 1}$, we write $b_n = o(a_n)$ if $\lim_{n \to \infty} b_n / a_n = 0$, and $b_n = O(a_n)$ if there exists a constant $C > 0$ such that $|b_n| \le C a_n$ for all $n \ge 1$. We set $a_{n} \sim b_{n}$ if $\lim_{n \to \infty} b_n / a_n = 1$, and write $a_{n} \asymp b_{n}$ if there exists a constant $C>0$
 such that  $C^{-1} a_{n} \le b_{n} \le C a_{n}$ for all $n \ge 1$. 
 When it is necessary to indicate the dependence on a parameter $\epsilon$, we use the notation  $O_{\epsilon}(a_n)$ and $a_{n} \asymp_{\epsilon} b_{n}$, where the implicit constants depend on $\epsilon$.

The notation and definitions concerning paths in $\mathbb{Z}^{d}$ are given below.

\begin{dfn}\label{LE}
 Let $m \ge 0$ be an integer. A sequence of points 
 $$\lambda = [\lambda(0), \lambda(1), \cdots, \lambda(m)] \subset \mathbb{Z}^{d}$$ is called a path of length $m$ if $|\lambda(k) - \lambda(k+1)| = 1$ for all $0 \le k \le m-1$. We write $\text{len}\, (\lambda) = m$ for the length of $\lambda$. For $0 \le a \le b \le m$, we set $\lambda[a,b] = [\lambda(a), \lambda(a+1), \cdots, \lambda(b)]$ to denote the portion of the path $\lambda$ between times $a$ and $b$. Furthermore, $\lambda$ is said to be a simple path if $\lambda(j) \neq \lambda(k)$ for all $0 \le j < k \le m$.

     Let $\lambda_{1} = [\lambda_{1}(0), \cdots, \lambda_{1}(m_{1})]$ and $\lambda_{2} = [\lambda_{2}(0), \cdots, \lambda_{2}(m_{2})]$ be two paths in $\mathbb{Z}^{d}$ of lengths $m_{1}$ and $m_{2}$, respectively, and suppose that $\lambda_{1}(m_{1}) = \lambda_{2}(0)$. We define the concatenation of $\lambda_{1}$ and $\lambda_{2}$, denoted by $\lambda_{1} \oplus \lambda_{2}$, by
$$\lambda_{1} \oplus \lambda_{2} = [ \lambda_{1} (0), \cdots , \lambda_{1} (m_{1}) , \lambda_{2} (1), \cdots , \lambda_{2} (m_{2})]. $$
    Then $\lambda_{1} \oplus \lambda_{2}$ is a path of length $m_{1}+m_{2}$.

\end{dfn}

Lawler's definition (given in Section 7.2 of \cite{Lawb}) of chronological loop-erasure is as follows.

\begin{dfn}\label{lawler}
Let $\lambda = [\lambda(0), \lambda(1), \cdots, \lambda(m)]$ be a path of length $m$ in $\mathbb{Z}^{d}$. We define the chronological loop-erasure of $\lambda$, denoted by $\text{LE} (\lambda)$, in the following manner. First, set
$$ t_0 \coloneqq \max\{t \ge 0 \, : \, \lambda(t) = \lambda(0)\}.$$
For each $i \geq 1$, define
$$ t_i \coloneqq \max\{t \ge 0 \, : \, \lambda(t) = \lambda(t_{i-1}+1)\}. $$
Let 
$$n = \min\{i \, : \, t_i = m\}.$$
    Then, the chronological loop-erasure of $\lambda$ is given by
    $$\text{LE} (\lambda) \coloneqq [\lambda(t_0), \lambda(t_1), \ldots, \lambda(t_n)].$$
\end{dfn}

Next, as an alternative loop-erasing procedure, we define LLF loop-erasure as follows.
    
\begin{dfn}\label{LLFLE}
    Let $\lambda = [\lambda(0), \lambda(1), \cdots, \lambda(m)]$ be a path of length $m$ in $\mathbb{Z}^{d}$. 
    \begin{itemize}
    \item[(i)] Define
    $$ l_{1} = \max \big\{ k -j \ : \ 0 \le j \le k \le m, \ \lambda (j) = \lambda (k) \big\}. $$
This is the maximum length of loops in $\lambda$. Among the pairs $(j,k)$ that attain the maximum on the right-hand side of the definition of $l_{1}$ above, let $(j_{1},k_{1})$ be the one with the smallest $j$:
\begin{itemize}
\item let 
$$\Lambda (1) = \Big\{ (j, k) \, : \, 0 \le j \le k \le m, \ \ \lambda (j) = \lambda (k), \ \ k-j = l_{1} \Big\},$$
\item set 
$$ j_{1} = \min \big\{ 0 \le  j \le m \, : \, \exists k \in [j, m] \, \text{ such that } \, (j,k) \in \Lambda (1) \big\},$$
\item choose $k_{1}$ so that 
$(j_{1}, k_{1}) \in \Lambda (1). $
\end{itemize}
That is, $(j_{1},k_{1})$ is the leftmost pair $(j,k)$ such that $\lambda(j)=\lambda(k)$ and $k-j=l_{1}$.

\item[(ii)]
 We define $\lambda_{1}$ to be the path obtained from $\lambda$ by removing this maximal loop. That is, $\lambda_{1}$ is a path of length $m - k_{1} + j_{1}$ defined by 
$$ \lambda_{1} = [\lambda (0), \cdots, \lambda (j_{1}), \lambda (k_{1} +1), \cdots \lambda (m) ] = \lambda [0, j_{1}] \oplus \lambda [k_{1}, m]. $$
Note that $\lambda = \lambda_{1}$ if and only if $\lambda$ is a simple path. For notational convenience, we abbreviate $\lambda$ as $\lambda_0$.

\item[(iii)] Apply the procedures (i) and (ii) above to $\lambda_{1}$ in the same way. Denote the resulting path by $\lambda_{2}$. Repeating the same procedure, $\lambda_{3}, \lambda_{4}, \cdots$ are obtained.

\item[(iv)] Define $q$ by
$$ q = \min \Big\{ l \ge 0 \ : \ \text{ $\lambda_{l}$ is a simple path} \Big\}.$$
 The largest-loop-first (LLF) loop-erasure of $\lambda$ is given by 
 $$ \text{LE}^{\ast} (\lambda) = \lambda_{q}. $$

\end{itemize}
    
\end{dfn}

\begin{rem}
The notation is the same as in Definition \ref{LE}. As in the case of the chronological loop-erasure (for basic properties of chronological loop-erasure, see Section 7.2 of \cite{Lawb}), $\mathrm{LE}^{\ast}(\lambda)$ is a simple path satisfying $\mathrm{LE}^{\ast}(\lambda) \subset \lambda$, whose initial and terminal points coincide with $\lambda(0)$ and $\lambda(m)$, respectively. In particular, we have
\begin{equation}\label{trivial}
\text{len} \, \Big( \text{LE}^{\ast} (\lambda ) \Big) \le m. 
\end{equation}

Furthermore, if $0 \le k \le m$ is a cut time of $\lambda$, that is, if $\lambda[0,k] \cap \lambda[k+1,m] = \emptyset$, then one can check that
\begin{equation}\label{CUT-LE}
    \text{LE}^{\ast} (\lambda) = \text{LE}^{\ast} \big( \lambda [0, k] \big) \oplus \text{LE}^{\ast} \big( \lambda [k, m] \big).
    \end{equation}
    
\end{rem}

\section{Proof of Theorem \ref{main}}

In this section, we present a proof of the main theorem, Theorem \ref{main}. To this end, we first establish several lemmas. As noted in Section 1, a property of LLF loop-erasure that is not shared by chronological loop-erasure is employed in the proof of Lemma \ref{3-lem3} below.

For a simple random walk $S$ on $\mathbb{Z}^d$ starting from the origin, let 
\begin{equation}\label{Ln}
 L_{n} = \text{len} \, \Big( \text{LE}^{\ast} \big( S[0,n] \big) \Big) 
 \end{equation}
be the length of the LLF loop-erasure of $S[0,n]$. Our goal in this section is to prove the following theorem.

\begin{thm}\label{LE-result}
Let $d=4$. We have 
\begin{equation*}
    E (L_{n} ) = n (\log n)^{-\frac{1}{2} + o (1)}.
\end{equation*}

\end{thm}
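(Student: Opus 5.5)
The plan follows the scheme already announced in the introduction: reduce Theorem \ref{LE-result} to the two conditions (A1) and (A2) for $\phi(n)=E(L_n)/n$, and then invoke the abstract Lemma \ref{exactlem} (as in \cite{exact}, \cite{DS-SW-4d}). Given (A1) and (A2), iterating \eqref{ass2} along dyadic scales gives $\phi(2^k)\le \phi(1)\prod_{j\le k}\{1-\frac{1-\epsilon}{2j}\}$, hence $\phi(n)\le (\log n)^{-\frac12+\epsilon+o(1)}$. The matching lower bound is immediate from ${\cal C}_n\le L_n$, which follows from \eqref{CUT-LE}: each cut time splits the LLF erasure into concatenated pieces, each of positive length. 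Combined with $E({\cal C}_n)\asymp n(\log n)^{-1/2}$ from \cite{slowly}, this gives $\phi(n)\ge c(\log n)^{-1/2}$. So the whole work is establishing (A1) and (A2).

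For (A1), I would show $\phi(2n)/\phi(n)\to1$ (and the same for any fixed ratio). For the upper bound, I would split $S[0,2n]$ at the first and last cut times near time $n$. The idea is to find cut times $k_1\in[n-n(\log n)^{-3},n]$ and $k_2\in[n,n+n(\log n)^{-3}]$, which exist with probability $1-o(1)$ in four dimensions. Then \eqref{CUT-LE} together with \eqref{trivial} bounds $L_{2n}$ by the LLF lengths of the two halves plus $o(n\phi(n))$. The error on the bad event is controlled by \eqref{trivial} and a small probability; since $\phi$ is only polylogarithmic, a probability of $o((\log n)^{-1})$ suffices. For the lower bound, I would use monotonicity-type comparisons, namely $L_m\ge {\cal C}_m$, together with the same splitting in reverse.

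For (A2), the key point is that \eqref{ass2} needs an improvement of relative order $1/\log n$ from splitting $[0,2n]$ into $[0,n]$ and $[n,2n]$. Write $\gamma^1=\mathrm{LE}^\ast(S[0,n])$ and $\gamma^2=\mathrm{LE}^\ast(S[n,2n])$. I would show two things:

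\begin{itemize}
\item[(a)] $L_{2n}\le L^1+L^2$ always holds up to negligible error. This should be a monotonicity property specific to LLF: a large loop straddling time $n$ is erased first and removes whole stretches, so the final path is at most the concatenation of the two separate erasures. This is presumably the content of Lemma \ref{3-lem3}, and it is the step that fails for chronological LE.
\item[(b)] With the correct probability there is an additional saving. If a long loop of $S[0,2n]$ crosses time $n$, say $S(j)=S(k)$ with $j<n<k$, then it removes the portions of $\gamma^1$ after the point $S(j)$ and of $\gamma^2$ before $S(k)$. The expected saving should be comparable to the expected number of points of $\gamma^1\cup\gamma^2$ lying between non-cut intersections. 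Using the four-dimensional intersection estimates, where the probability that $S[0,n]$ and $S[n,2n]$ intersect away from the endpoint behaves like $\frac{1}{2\log n}$ per dyadic scale, this gives $E(L_{2n})\le 2n\phi(n)\{1-\frac{\log2}{2\log n}(1-\epsilon)\}$.
\end{itemize}

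The main obstacle is (a)/(b): proving that the LLF erasure of the concatenation is no longer than the concatenation of the LLF erasures of the halves, minus a loop-induced saving. The danger is that erasing a crossing big loop first could disrupt loops inside each half, so that segments which would have been erased survive. I would try to prove a deterministic comparison lemma. Since loops are removed in decreasing length, any loop of $\lambda[0,n]$ longer than the crossing loop is removed identically in both procedures. Loops shorter than the crossing loop that are destroyed by it lie entirely inside the erased stretch. Hence, by induction on the number of erasure steps, the final path is contained, as a sequence, in the concatenation $\gamma^1\oplus\gamma^2$ with the crossing segment removed. The probabilistic saving would then follow as in \cite{DS-SW-4d}, using the fact that cut-time structure and LLF erasure agree up to $(\log n)^{o(1)}$ corrections.
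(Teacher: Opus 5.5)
Your outer architecture matches the paper's. The lower bound comes from ${\cal C}_n\le L_n$ via \eqref{CUT-LE} and \cite{slowly}, (A1) comes from splitting at cut times near the junction, and the upper bound comes from Lemma~\ref{exactlem}. One small correction in (A1): Lemma 7.7.4 of \cite{Lawb} gives a failure probability of $O(\log\log n/\log n)$, which is not $o((\log n)^{-1})$. What you actually need is $o(\phi(n))$, and that holds by \eqref{cutbound-1}.

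The genuine gap is in (A2), and it starts with (a): your deterministic comparison lemma is false. A loop shorter than the crossing loop need not lie inside the erased stretch. It can straddle an endpoint, and then erasing the crossing loop removes only its first visit, so its tail survives in $\mathrm{LE}^{\ast}$ of the concatenation even though it is absent from $\gamma^{2}$.

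Concretely, take a nearest-neighbour path $\lambda$ of length $40$ in $\mathbb{Z}^4$ whose only coincidences are $\lambda(0)=\lambda(10)$, $\lambda(4)=\lambda(20)$ and $\lambda(18)=\lambda(28)$. Such a path is easily built by routing the pieces through different coordinate directions. Split it at time $10$.
\begin{itemize}
\item $\mathrm{LE}^{\ast}(\lambda[0,10])$ has length $0$.
\item $\mathrm{LE}^{\ast}(\lambda[10,40])=\lambda[10,18]\oplus\lambda[28,40]$ has length $20$.
\item In $\lambda$ itself the largest loop is the crossing one, $[4,20]$. After erasing it, $\lambda[0,4]\oplus\lambda[20,40]$ is already simple and has length $24$.
\end{itemize}
Scaling all times by $K$ makes the excess a fixed fraction of the total length. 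So neither your containment claim nor ``$L_{2n}\le L_n+L_n'$ up to negligible error'' holds deterministically. Lemma~\ref{3-lem3} is also not a statement of this kind. Since $L_n+L_n'-L_{2n}$ can be negative, any substitute for (a) must be probabilistic. Note that the chain \eqref{3-lower} in the paper likewise discards the off-good-event terms without comment.

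Your (b) presupposes that the long crossing loop is actually erased. Securing this is the real content of Lemma~\ref{3-lem3}, and the paper does it \emph{locally}, which is what your proposal is missing.
\begin{itemize}
\item Partition by the block-level longest intersection $L^{\ast}=l$, with blocks of length $b_n=n(\log n)^{-2}$, uniquely attained at $I^1(j)\leftrightarrow I^2(l-j)$.
\item Take cut times $T\in I^1(j+1)$ and $T'\in I^2(l-j+1)$ of $S[0,2n]$. Together with the extra non-intersection conditions in $W(4)$ and $W(5)$, these rule out every competitor loop (the three cases in the proof). Hence the maximal crossing loop $S[u_1,u_2]$ is the \emph{first} loop erased in $\mathrm{LE}^{\ast}(S[T,T'])$, which gives the crude bound $\mathrm{len}\,\mathrm{LE}^{\ast}(S[T,T'])\le 4b_n$. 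No comparison with $\gamma^1,\gamma^2$ is needed.
\item Cut times $U\in I^1(j-1)$ and $U'\in I^2(l-j-1)$ then give $L_n+L_n'-L_{2n}\ge \mathrm{len}\,\mathrm{LE}^{\ast}(S[t^1_{j-1},n])+\mathrm{len}\,\mathrm{LE}^{\ast}(S[n,t^2_{l-j-1}])-6b_n$.
\item The concentration estimate \eqref{3-lem1-1}, through events $W(8)$ and $W(9)$, turns this into $(1-\epsilon)lb_n\phi(n)$.
\item Lemma~\ref{3-lem4} shows that all these events cost only a factor $1-O((\log n)^{-1/2}(\log\log n)^r)$ relative to $P(W(1))=\frac{1}{2\log n}\log[1+\frac{1}{l(l-2)}](1+o(1))$, taken from \cite{DS-SW-4d}.
\item Summing over $(l,j)$ gives exactly $\frac{\log 2}{\log n}\,n\phi(n)$.
\end{itemize}

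Your replacement for this is the ``expected number of points between non-cut intersections'' together with ``cut-time structure and LLF erasure agree up to $(\log n)^{o(1)}$''. That is circular, since it is essentially the theorem being proved. It is also too coarse: (A2) needs the saving with relative error $O(\epsilon)$, because replacing $\frac{\log 2}{2}$ by $c\,\frac{\log 2}{2}$ in \eqref{ass2} changes the exponent to $-c/2$.
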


Write
\begin{equation*}
\phi (n) = \frac{E (L_{n})}{n}    
\end{equation*}
for the logarithmic correction of $E (L_{n})$. By combining an estimate of the expected number of cut times with \eqref{CUT-LE}, we have
\begin{equation}\label{3-eq-2}
\phi (n) \ge c (\log n)^{-\frac{1}{2}},
    \end{equation}
for some $c > 0$. See Remark \ref{remcut} below for this. Therefore, to prove Theorem \ref{LE-result}, we can follow the same method as in the proof of Theorem 1.2.3 in \cite{exact}. Namely, we aim to use the following lemma.

\begin{lem}[Lemma 4.1.1 in \cite{exact}]\label{exactlem}
    Suppose that $\phi (n)$ satisfies the following two conditions (A1) and (A2).
\begin{itemize}
   \item[(A1)] $\phi (n)$ is slowly varying.
    \item[(A2)] For all $\epsilon \in (0,1)$ there exists $N = N_{\epsilon}$ such that for every $n \ge N$ we have 
    \begin{equation}\label{A2}
     \phi (2n) \le \phi (n) \Big\{ 1 - \frac{\log 2}{2 \log n} (1 - \epsilon ) \Big\}. 
     \end{equation}
\end{itemize}
    Then for all $\epsilon \in (0,1)$ we have 
    $$ \limsup_{n \to \infty} \, (\log n)^{\frac{1}{2} - \epsilon} \phi (n) = 0. $$
\end{lem}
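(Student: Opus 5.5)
The statement is purely analytic: it uses only (A1) and (A2), and no probabilistic input. The plan is to iterate (A2) along the dyadic sequence $2^k n_0$, and then use slow variation (A1) to pass from dyadic points to all $n$.

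\emph{Iterating (A2).} Fix $\epsilon\in(0,1)$ and let $N=N_\epsilon$ be as in (A2). For $k\ge 0$ put $n_k=2^kN$. Applying \eqref{A2} repeatedly gives
\begin{equation*}
\phi(n_k)\le \phi(N)\prod_{j=0}^{k-1}\Big\{1-\frac{(1-\epsilon)\log 2}{2\log(2^jN)}\Big\}.
\end{equation*}
Since $N$ is fixed, all factors are eventually positive, and we may enlarge $N$ if needed. Using $\log(1-x)\le -x$, we take logarithms:
\begin{equation*}
\log\phi(n_k)\le \log\phi(N)-\frac{1-\epsilon}{2}\sum_{j=0}^{k-1}\frac{\log 2}{j\log 2+\log N}.
\end{equation*}
The sum equals $\log k+O_\epsilon(1)$, and $\log\log n_k=\log k+O_\epsilon(1)$. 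Hence
\begin{equation*}
\phi(n_k)\le C_\epsilon(\log n_k)^{-\frac{1-\epsilon}{2}}.
\end{equation*}
Running the argument with $\epsilon/2$ in place of $\epsilon$, we get $\phi(n_k)\le C(\log n_k)^{-\frac12+\frac{\epsilon}{4}}$. Multiplying by $(\log n_k)^{\frac12-\epsilon}$ then gives a quantity that tends to $0$ along the dyadic sequence.

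\emph{Filling in between dyadic points.} For general large $n$, choose $k$ with $n_k\le n<n_{k+1}$. By (A1) we have $\phi(2m)/\phi(m)\to1$. Moreover, the ratio $\phi(\lambda m)/\phi(m)\to 1$ uniformly in $\lambda\in[1,2]$; this is the uniform convergence theorem for slowly varying functions, and for sequences one can note that $\phi$ here is tied to $E(L_n)$. Consequently $\phi(n)\le 2\phi(n_k)$ for large $n$. Alternatively, slow variation as used in \cite{exact} may already be phrased to give this comparability directly. Since also $\log n\asymp\log n_k$, the dyadic bound transfers to all $n$, and we obtain $\limsup_n(\log n)^{\frac12-\epsilon}\phi(n)=0$.

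The only delicate point is this interpolation step. It requires a uniform comparison of $\phi(n)$ with $\phi(n_k)$, which I would obtain from the uniform convergence theorem for slowly varying sequences, or from whatever form of (A1) is used in \cite{exact}. The dyadic iteration itself is routine.
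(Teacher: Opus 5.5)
Your proposal is correct and is essentially the argument behind Lemma 4.1.1 of \cite{exact}, which the paper cites without proof. You iterate (A2) along the doubling chain $2^kN$ to get $\phi(2^kN)\le C_\epsilon(\log 2^kN)^{-(1-\epsilon)/2}$, then use (A1) to pass to all $n$; this is exactly why the paper remarks that slow variation is needed in addition to (A2). The interpolation step you flag needs no appeal to the specific form of $E(L_n)$. If $\phi(\lfloor\lambda n\rfloor)/\phi(n)\to1$ for every $\lambda>0$, then $x\mapsto\phi(\lfloor x\rfloor)$ is a slowly varying function (Galambos--Seneta), so Karamata's uniform convergence theorem gives $\phi(n)\le 2\phi(n_k)$ for $n_k\le n<n_{k+1}$ and $k$ large.
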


\begin{rem}
Although only the condition (A2) is stated in the assumption of Lemma 4.1.1 in \cite{exact}, its proof makes use of the fact that a function is slowly varying.
    \end{rem}

Since we have the lower bound in \eqref{3-eq-2}, it remains to check the conditions (A1) and (A2) in Lemma \ref{exactlem}. To this end, we need to prepare several auxiliary lemmas. We begin by introducing the notation.

\begin{dfn}\label{3-def1}

\begin{itemize}
   \item[(D1)] Let $b_{n} = n (\log n)^{-2}$ and $M = (\log n)^{2}.$ We divide the time interval $[0,n]$ into $M$ subintervals of equal length ($= b_{n}$) as follows. Let 
$$I^{1} (j) = \big[ n - j b_{n}, n - (j-1) b_{n} \big] =: [t^{1}_{j}, t^{1}_{j-1}] \ \ \ \text{ for \ \  $j =1, 2, \cdots , M$.} $$
 Note that $ 0 = t^{1}_{M} < t^{1}_{M-1} < \cdots < t^{1}_{1} < t^{1}_{0} = n. $

Similarly, we divide the time interval $[n,2n]$ in the same way as follows. Let 
$$I^{2} (j) = \big[ n + (j-1) b_{n}, n + j b_{n} \big] =: [t^{2}_{j-1}, t^{2}_{j}] \ \ \ \text{ for \ \  $j =1, 2, \cdots , M$.}$$
 Note that $ n= t^{2}_{0} < t^{2}_{1} < \cdots < t^{2}_{M-1} < t^{2}_{M} = 2n.  $

\item[(D2)] We set 

$$ L^{1} (j) = \text{len} \, \Big( \text{LE}^{\ast} \big( S [t^{1}_{j}, t^{1}_{j-1}  ] \big) \Big) \ \ \ \text{and } \ \ \ L^{2} (j) = \text{len} \, \Big( \text{LE}^{\ast} \big( S [t^{2}_{j-1}, t^{2}_{j}  ] \big) \Big). $$

As in (20) of \cite{DS-SW-4d}, we let 
$$ L^{\ast} = \max \big\{ j + k \ : \ I^{1} (j) \leftrightarrow I^{2} (k) \big\} $$
denote the size of the longest intersection. Here, for two intervals $I, J \subset [0, \infty)$, we write 
$$ I \leftrightarrow J \ \ \text{ if } \ \ \{ S(i) \ : \ i \in I \} \cap \{ S(j) \ : \ j \in J \} \neq \emptyset.$$
Clearly, $ 2 \le L^{\ast} \le 2M $. 


\item[(D3)] Set 
$$ L_{n}' = \text{len} \, \Big( \text{LE}^{\ast} \big( S[n,2n] \big) \Big). $$
Note that $L_{n}$ and $L_{n}'$ have the same distribution.

\item[(D4)] A time $k \in [0,n]$ is called a cut time (up to $n$) if $S[0,k] \cap S[k+1,n] = \emptyset$.

\end{itemize}
\end{dfn}

\begin{rem}\label{remcut}
For $d=4$, if we let ${\cal C}_{n}$  denote the number of cut times up to $n$ in the time interval $[0,n]$, it is shown in \cite{slowly} that 
\begin{equation*}
    E \big( {\cal C}_{n} \big) = c  n (\log n)^{-\frac{1}{2}} \big\{ 1 + o (1) \big\},
\end{equation*}
for some $c >0$.
 Combining this and \eqref{CUT-LE}, we have
\begin{equation}\label{cutbound-1}
E (L_{n} ) \ge c n (\log n)^{-\frac{1}{2}}.
\end{equation}
Here, we recall that $L_{n}$ is defined in \eqref{Ln}. Dividing both sides of \eqref{cutbound-1} by $n$, we obtain \eqref{3-eq-2}.
\end{rem}

The following lemma provides a bound on the difference between $L_n$ and its expectation.

\begin{lemma}
Let $d=4$. For any $\epsilon \in (0, 1)$,  there exist $C = C_{\epsilon} > 0$ and $N = N_{\epsilon} \ge 1$ depending on $\epsilon$ such that for all $n \ge N$, we have
\begin{equation}\label{3-lem1-1}
    P \Big( \big| L_{n} - n \phi (n) \big| \ge \epsilon n \phi (n) \Big) \le C (\log n )^{-\frac{1}{2}} \log \log n.
\end{equation}
\end{lemma}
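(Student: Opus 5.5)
We follow the decomposition strategy of \cite{DS-SW-4d}: cut $[0,n]$ into many blocks separated by cut times, so that $L_n$ is close to a sum of nearly independent block lengths. This will give concentration with an error probability coming only from the rare failure of the cut-time structure.

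\textbf{Step 1 (decomposition).} Let $K=(\log n)^{2}$ (the same order as $M$ in Definition \ref{3-def1}) and $a_n=n/K$. Split $[0,n]$ into the blocks $J_i=[(i-1)a_n,ia_n]$. We call $J_i$ \emph{good} if $S$ has a cut time (for the whole path $S[0,n]$) in a short window $[ia_n, ia_n+a_n(\log n)^{-2}]$ near each endpoint. On the event $G$ that all blocks are good, choose one such cut time $\tau_i$ in each window. Iterating \eqref{CUT-LE} then gives
$$L_n=\sum_{i}\text{len}\big(\text{LE}^{\ast}(S[\tau_{i-1},\tau_i])\big).$$
Each summand differs from the block quantity $\text{len}(\text{LE}^{\ast}(S[(i-1)a_n,ia_n]))$ only through the short windows. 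That difference is bounded via \eqref{trivial} by the window length, which is negligible after summing.

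\textbf{Step 2 (probability of failure).} We need $P(G^c)\le C(\log n)^{-1/2}\log\log n$. The standard 4d estimate from \cite{slowly} and \cite{Law4d} is that the probability of no global cut time in a window of length $m$ around time $t$ is $O(\log\log n/\log n)$-type, once we use the non-intersection exponent of two 4d walks, $P(S^1[0,m]\cap S^2[0,m]=\emptyset)\asymp(\log m)^{-1/2}$, together with the fact that windows of length $a_n(\log n)^{-2}$ still have $\log$-size comparable to $\log n$.

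A union bound over $K$ blocks is too lossy. We therefore do not require every block to be good. Instead we bound the total length of bad blocks: on a bad block, \eqref{trivial} bounds its contribution to the discrepancy by $a_n$. Bad-block mass then affects $L_n$ by at most $a_n\cdot\#\{\text{bad}\}$. Its expectation is at most $n\cdot\max_iP(J_i\text{ bad})$, and this is also bounded, crudely, by the probability that the long loops erased across block boundaries are large. A Markov bound at level $\epsilon n\phi(n)\ge c\epsilon n(\log n)^{-1/2}$, using \eqref{3-eq-2}, converts this into the stated $(\log n)^{-1/2}\log\log n$ provided the per-window failure probability is $O((\log n)^{-1}\log\log n)$. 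This is the main obstacle. It requires a genuine estimate on the probability that a loop of $S$ straddles a given time window at scale between $a_n(\log n)^{-2}$ and $n$. We would attack it with the Green's function intersection bound in 4d: the expected number of intersections of $S[0,t-m]$ and $S[t+m,n]$ is $O(\log(n/m)/\log n)$-order relative, giving the $\log\log n/\log n$ factor.

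\textbf{Step 3 (variance of the sum).} The block lengths $\text{len}(\text{LE}^{\ast}(S[(i-1)a_n,ia_n]))$ are i.i.d., each with mean $a_n\phi(a_n)$ and bounded by $a_n$. Hence the variance of their sum is at most $Ka_n^2=n^2/K$. Chebyshev gives
$$P\big(|\Sigma-K a_n\phi(a_n)|\ge \tfrac{\epsilon}{3} n\phi(n)\big)\le C_\epsilon\frac{(\log n)}{K}\le C_\epsilon(\log n)^{-1}.$$
Finally, $Ka_n\phi(a_n)$ is close to $n\phi(n)$. Taking expectations in Step 1 shows $|E L_n-E\Sigma|$ is $o(n\phi(n))$ by the same failure estimate. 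Combining the three bounds yields \eqref{3-lem1-1}.
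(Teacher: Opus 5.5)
Your architecture is the paper's proof. You use $(\log n)^{2}$ i.i.d.\ blocks of length $n(\log n)^{-2}$ with cut-time windows at the block ends, and a deterministic bound $|L_n-\Sigma|\le C(\text{total window length}+\text{block length}\cdot\#\{\text{bad blocks}\})$ from \eqref{CUT-LE} and \eqref{trivial}. You apply Markov's inequality to the bad-block mass at level $\epsilon n\phi(n)\ge c\epsilon n(\log n)^{-1/2}$, and Chebyshev to the block sum. Your cruder variance bound $Ka_n^2$ gives $C_\epsilon(\log n)^{-1}$ instead of the paper's $C_\epsilon(\log n)^{-3/2}$, which still suffices.

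The one step that fails as written is the deterministic comparison, because of where you put the windows. Your windows $[ia_n,ia_n+w]$, with $w=a_n(\log n)^{-2}$, sit to the right of each endpoint, so the right cut time $\tau_i$ of $J_i$ lies outside $J_i$. Then $\text{LE}^{\ast}(S[(i-1)a_n,ia_n])$ cannot be split at $\tau_i$. Moreover, $\text{len}\,\text{LE}^{\ast}(S[\tau_{i-1},\tau_i])$ need not be within $w$ of $\text{len}\,\text{LE}^{\ast}(S[\tau_{i-1},ia_n])$. If $S[ia_n,\tau_i]$ hits a point visited shortly after $\tau_{i-1}$, the LLF procedure first erases a loop of length of order $a_n$. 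Nothing in the cut-time property of $\tau_{i-1}$ or $\tau_i$ rules this out. The fix is to require cut times in both $[(i-1)a_n,(i-1)a_n+w]$ and $[ia_n-w,ia_n]$, i.e.\ inside $J_i$ at both ends. This is what the paper does with $[t^1_j,t^1_j+a_n]$ and $[t^1_{j-1}-a_n,t^1_{j-1}]$. The $\text{LE}^{\ast}$ of the middle piece then appears in both $L_n$ and the block length and cancels.

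Second, you do not need to prove the per-window estimate you call the main obstacle, namely $P(\text{no cut time up to } n \text{ in a window})\le C(\log n)^{-1}\log\log n$. It is Lemma 7.7.4 of \cite{Lawb}, which the paper simply cites for windows of length $n(\log n)^{-6}$; your longer windows only make failure less likely. Your proposed derivation via the expected number of intersections of $S[0,t]$ and $S[t+m,n]$ would not suffice on its own. It controls only the event that some loop straddles the whole window, and the absence of such a loop does not produce a cut time inside the window. You would also need a time $k$ in the window with $S[0,k]\cap S[k+1,n]=\emptyset$: a local cut point of the window segment that is not destroyed by $S[0,t]$ or $S[t+m,n]$ hitting the segment. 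That is the real content of Lawler's lemma. With these two adjustments your argument coincides with the paper's.
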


\begin{proof}

  Let 
$$a_{n} = n (\log n)^{-6}. $$
For $j = 1,2, \dots, M$, let $J_{j}$ be the indicator function of the event that there exists no cut time up to $n$ in at least one of the time intervals $\big[ t^{1}_{j}, t^{1}_{j} + a_{n} \big]$ or $\big[ t^{1}_{j-1} - a_{n}, t^{1}_{j-1} \big]$. Then, by \eqref{trivial} and \eqref{CUT-LE}, there exists a universal constant $C>0$ such that
\begin{equation}\label{lem-1-2}
    \Big| L_{n} - \sum_{j=1}^{M} L^{1} (j) \Big| \le C \Big( n (\log n)^{-4} + n (\log n)^{-2} \sum_{j=1}^{M} J_{j} \Big),
\end{equation}
where $L^{1} (j)$ is defined in (D2) of Definition \ref{3-def1}. Let us make a brief remark concerning this inequality. When $J_{j} = 0$, let $T^{-}_{j} \in \big[ t^{1}_{j}, t^{1}_{j} + a_{n} \big]$ and $T^{+}_{j} \in \big[ t^{1}_{j-1} - a_{n}, t^{1}_{j-1} \big]$ denote cut times whose existence is guaranteed. For an interval $I^{1}(j)$ with $J_{j} = 0$, by decomposing $\text{LE}^{\ast} \big( S[0,n] \big)$ and $\text{LE}^{\ast} \big( S[t^{1}_{j}, t^{1}_{j-1}] \big)$ using \eqref{CUT-LE}, we see that the term $\text{len} \, \Big( \text{LE}^{\ast} \big( S[T^{-}_{j}, T^{+}_{j} ] \big) \Big)$ appears twice in the difference on the left-hand side of \eqref{lem-1-2} and cancels out. On the other hand,  for $I^{1}(j)$ with $J_{j}=1$, by using \eqref{trivial}, we see that the contribution coming from each such interval is bounded above by $ Cb_{n} = Cn (\log n)^{-2}$. As a result, the inequality \eqref{lem-1-2} follows.

Note that Lemma 7.7.4 of \cite{Lawb} guarantees that 
$$ E (J_{j} ) \le C (\log n)^{-1} \log \log n, $$
for each $j=1,2, \cdots, M.$ Combining this with  \eqref{lem-1-2}, it follows that 
\begin{equation}\label{expec}
    \Big| E \Big( \sum_{j=1}^{M} L^{1} (j) \Big) - n \phi (n) \Big| \le C n (\log n)^{-1} \log \log n.
\end{equation}
This implies that 
\begin{equation}\label{diffphi}
    \big| \phi (n) - \phi \big( n (\log n)^{-2} \big) \big| \le C (\log n)^{-1} \log \log n.
\end{equation}
In particular, as $\phi (n) \ge c (\log n)^{-\frac{1}{2}}$ by \eqref{cutbound-1}, we have 
\begin{equation}\label{slowvar}
    \phi \big( n (\log n)^{-2} \big) = \phi (n) \Big\{ 1 + O \Big( (\log n)^{-\frac{1}{2}} \log \log n \Big) \Big\}.
\end{equation}

On the other hand, using \eqref{trivial} and the independence of the $L^{1}(j)$'s, it follows from \eqref{expec}  that 
$$ \text{Var} \Big( \sum_{j=1}^{M} L^{1} (j) \Big) \le  \sum_{j=1}^{M} E \Big(  L^{1} (j)^{2} \Big) \le n (\log n)^{-2} \sum_{j=1}^{M} E \Big(  L^{1} (j) \Big) \le C n^{2} (\log n)^{-2} \phi (n). $$

Let $(\log n)^{-\frac{1}{3}} \le \epsilon < 1 $. Since $\phi (n) \ge c (\log n)^{-\frac{1}{2}},$ it follows from \eqref{lem-1-2} and \eqref{diffphi} that for large $n$

\begin{align}
&P \Big( \big| L_{n} - n \phi (n) \big| \ge \epsilon n \phi (n) \Big) \notag \\
&\le P \Big( C n (\log n)^{-2} \sum_{j=1}^{M} J_{j} \ge \frac{\epsilon}{4} n \phi (n) \Big) + P \Big( \Big| \sum_{j=1}^{M} L^{1} (j)  - n \phi (n) \Big| \ge \frac{\epsilon}{4} n \phi (n) \Big) \notag \\
&\le C \epsilon^{-1} (\log n)^{-\frac{1}{2}} \log \log n + P \bigg( \bigg| \sum_{j=1}^{M} L^{1} (j)  - \sum_{j=1}^{M} E \Big( L^{1} (j) \Big) \bigg| \ge \frac{\epsilon}{4} n \phi (n) \bigg) \notag \\
&\le C \epsilon^{-1} (\log n)^{-\frac{1}{2}} \log \log n + C \epsilon^{-2} (\log n)^{-\frac{3}{2}},  \label{diffest}
\end{align}
where $C > 0$ is some universal constant, and we note that the inequality \eqref{diffest} holds for all $(\log n)^{-\frac{1}{3}} \le \epsilon < 1 $. This completes the proof.
\end{proof}

\begin{rem}
    Take $a \in (0,4).$ By replacing the length of each subinterval from $b_{n} = n (\log n)^{-2}$ to $n (\log n)^{-a}$ and carrying out exactly the same proof as for \eqref{slowvar}, we have 
    \begin{equation*}
    \phi \big( n (\log n)^{-a} \big) = \phi (n) \Big\{ 1 + O \Big( (\log n)^{-\frac{1}{2}} \log \log n \Big) \Big\}.
\end{equation*}
\end{rem}

In the following lemma, we show that the condition (A1) of Lemma \ref{exactlem} holds.

\begin{lem}\label{3-lem2}
Let $d=4$. $\phi (n)$ is slowly varying. 
\end{lem}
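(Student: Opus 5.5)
To show that $\phi$ is slowly varying, I will check that $\phi(\lambda n)/\phi(n) \to 1$ as $n \to \infty$ for every fixed $\lambda > 0$. By symmetry it is enough to take $\lambda \in (0,1)$ and compare $\phi(\lambda n)$ with $\phi(n)$. The tool is the block decomposition from the proof of the preceding lemma, now with blocks whose length does not depend on $\lambda$.

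Fix $\lambda \in (0,1)$ and set $m = n(\log n)^{-2}$. Split $[0,n]$ into $M = n/m = (\log n)^2$ blocks of length $m$, and split $[0,\lambda n]$ into $M_\lambda = \lambda (\log n)^2$ blocks of the same length $m$; round to integers, and absorb the leftover piece, of length at most $m$, using \eqref{trivial}. The key estimate \eqref{lem-1-2} and its consequence \eqref{expec} only use three facts: the cut-time decomposition \eqref{CUT-LE}, the trivial bound \eqref{trivial}, and Lemma 7.7.4 of \cite{Lawb}. That lemma bounds the probability that a window of length $a_n = n(\log n)^{-6}$ at a block endpoint contains no cut time of the whole path by $C(\log n)^{-1}\log\log n$. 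The same argument applies verbatim to $S[0,\lambda n]$ with blocks of length $m$, because $\log(\lambda n) \sim \log n$ and the window and block sizes are comparable on the logarithmic scale. By translation invariance, each block contributes expected length $m\phi(m)$. This gives
\begin{align*}
\bigl| n\phi(n) - M m \phi(m) \bigr| &\le C n (\log n)^{-1}\log\log n, \\
\bigl| \lambda n\phi(\lambda n) - M_\lambda m \phi(m) \bigr| &\le C n (\log n)^{-1}\log\log n,
\end{align*}
where $C$ may depend on $\lambda$. Dividing the first bound by $n$ and the second by $\lambda n$ yields
\begin{equation*}
|\phi(n) - \phi(m)| + |\phi(\lambda n) - \phi(m)| \le C_\lambda (\log n)^{-1}\log\log n .
\end{equation*}

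Hence $|\phi(\lambda n) - \phi(n)| \le C_\lambda (\log n)^{-1}\log\log n$. By the lower bound \eqref{3-eq-2}, $\phi(n) \ge c(\log n)^{-1/2}$, so
\begin{equation*}
\frac{\phi(\lambda n)}{\phi(n)} = 1 + O_\lambda\big((\log n)^{-1/2}\log\log n\big) \to 1 .
\end{equation*}
For $\lambda > 1$, apply this with $n$ replaced by $\lambda n$ and $\lambda$ replaced by $1/\lambda$. Since $\phi$ is only defined on integers, I will use the floor convention and note that $\phi(n+k) - \phi(n) = O(k/n)$ for bounded $k$, which follows from \eqref{trivial} together with \eqref{CUT-LE}-type comparisons. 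This handles the non-integer values of $\lambda n$.

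The main obstacle is making sure the cut-time input, Lemma 7.7.4 of \cite{Lawb}, holds uniformly when the total time horizon is $\lambda n$ instead of $n$ while the blocks stay of length $n(\log n)^{-2}$. Near the far end of the path the relevant quantity is a cut time of $S[0,\lambda n]$, not of $S[0,n]$. I would handle this by noting that the estimate depends only on the window length relative to the distances to the two ends of the path, and those distances are all of order at least $n(\log n)^{-2}$ up to $\lambda$-dependent constants. So the bound $C_\lambda(\log n)^{-1}\log\log n$ persists. The blocks adjacent to the two ends of the path, where a window reaches the endpoint, need separate care. There are only $O(1)$ of them, each of length $m$, so by \eqref{trivial} they contribute $O(n(\log n)^{-2})$, which is negligible.
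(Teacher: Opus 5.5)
Your argument is correct, but it uses a different decomposition from the paper's. The paper fixes $p>1$, cuts $[0,n]$ into $p$ blocks of length $n/p$, and runs the cut-time comparison behind \eqref{expec} once. This gives $|n\phi(n)-n\phi(n/p)|\le C_p\{n(\log n)^{-6}+n(\log n)^{-1}\log\log n\}$ directly, with only $O(p)$ windows needing a cut time.

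You instead keep the fine mesh $m=n(\log n)^{-2}$ for both horizons $n$ and $\lambda n$ and route through $\phi(m)$. In effect you apply \eqref{diffphi} at two scales with a common block length and then use the triangle inequality. This costs $(\log n)^2$ windows instead of $p$, which is harmless: the per-window failure probability times the block length still sums to $O(n(\log n)^{-1}\log\log n)$.

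What your route buys is that every real $\lambda>0$ is handled at once. The rounding is absorbed into a leftover piece of length at most $m$, and the constant is uniform for $\lambda$ in compact subsets of $(0,\infty)$; this uniformity is what one wants when interpolating between dyadic scales. The $n/p$-block comparison is exact only for integer $p$. For non-integer $p$ the leftover block has length comparable to $n$, and comparing its expected erased length with $\phi(n/p)$ is itself a slow-variation statement at a non-integer ratio.

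One side remark of yours is not justified: $\phi(n+k)-\phi(n)=O(k/n)$ does not follow from \eqref{trivial} and \eqref{CUT-LE}. The length of the LLF erasure is not Lipschitz under appending steps; for instance, if $S(n+1)=S(0)$ then the whole path is the largest loop and $L_{n+1}=0$. The cut-time comparison only gives $|\phi(n+k)-\phi(n)|\le C(\log n)^{-1}\log\log n+O(k/n)$. That weaker bound is still $o(\phi(n))$ by \eqref{3-eq-2}, so your conclusion stands. You can also drop the reduction from $\lambda>1$ to $\lambda<1$ entirely. Run the fine-block comparison directly on $S[0,\lfloor\lambda n\rfloor]$; the window length may be chosen as $a_{\lambda n}$, independently of $m$.
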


\begin{proof}
Fix a real number $p>1$. It is sufficient to prove the following equality. 
\begin{equation}\label{slowvar-2}    \phi \big( n/p \big) \sim \phi (n).
\end{equation}
However, by replacing the length of each subinterval from $b_{n}$ to $n/p$ and arguing as in the proof of \eqref{expec}, we have 
$$\big| n \phi (n) - n \phi (n/p) \big| \le C_{p} \Big\{  n (\log n)^{-6} + n (\log n)^{-1} \log \log n \Big\},  $$
for some constant $C_{p} >0$ depending on $p$.
 Dividing both sides by $n \phi (n)$ and using the fact that $\phi(n) \ge c (\log n)^{-\frac{1}{2}}$ by \eqref{cutbound-1}, we obtain \eqref{slowvar-2}.
\end{proof}

Therefore, to prove Theorem \ref{LE-result}, it suffices to verify the condition \eqref{A2} in (A2). 

\vspace{3mm}

To this end, we define events corresponding to the events $A^{1}, \cdots, A^{11}$ introduced in Section 4.2 of \cite{exact}. Take $\epsilon \in (0,1)$. For $100 \le l \le 2M$ and $20 \le j \le l-20$, we define the events $W(1), \cdots, W(9)$ as follows.

\begin{align*}
&W (1) =\big\{ I^{1} (j) \leftrightarrow I^{2} (l-j) \big\}, \\
&W (2) = \{ L^{\ast}= l \}, \\
&W (3) = \Big\{  \text{there exists just  one pair } (j,k) \text{ which attains the maximum of } L^{\ast}  \Big\}, \\
&W (4) = \Big\{ \exists T \in I^{1}(j+1) \text{ such that } T \text{ is a cut time up to } 2 n, \\
&  \ \ \ \ \ \ \  \ \ \ \ \ \ \ \   S[0, t^{1}_{j}] \cap S [n, 2n ] = \emptyset, \ \ \ S\big[ t^{1}_{j-1}, t^{1}_{j-3} \big] \cap S \big[ t^{2}_{l-j-3}, t^{2}_{l-j} \big] = \emptyset \Big\}, \\
&W (5)  = \Big\{ \exists T' \in I^{2} (l-j+1) \text{ such that } T' \text{ is a cut time up to } 2 n, \\ 
& \ \ \ \ \ \ \ \ \ \ \ \ \ \ \ \ \ S [0,n] \cap S [t^{2}_{l-j}, 2n] = \emptyset, \ \ S\big[t^{1}_{j}, t^{1}_{j-3} \big] \cap S \big[ t^{2}_{l-j-3}, t^{2}_{l-j-1} \big] = \emptyset \Big\}, \\
&W (6) = \Big\{ \exists U \in I^{1} (j-1) \text{ such that } U \text{ is a cut time up to } n \Big\}, \\
&W (7) = \Big\{ \exists U' \in I^{2} (l-j-1) \text{ such that } S[n, U'] \cap S[U'+1, 2n] = \emptyset \Big\}, \\
&W (8) = \Big\{ \text{len} \, \Big(     \text{LE}^{\ast} \big( S [ t^{1}_{j-1}, n ] \big) \Big)  \ge (1 -\epsilon ) j b_{n} \phi (n) \Big\}, \\ 
&W (9) = \Big\{ \text{len} \, \Big(     \text{LE}^{\ast} \big( S [n, t^{2}_{l-j-1}] \big) \Big) \ge (1 -\epsilon ) (l-j) b_{n} \phi (n) \Big\}.
 \end{align*}

 \begin{rem}
As they are not needed here, we do not consider the events corresponding to $A^{6}$ and $A^{7}$ defined in Section 4.2 of \cite{exact}. We also note that $W(4)$ and $W(5)$ include additional conditions relative to $A^{4}$ and $A^{5}$ in \cite{exact}, respectively. These additional conditions will be needed in the proof of Lemma \ref{3-lem3} below.
\end{rem}

The following lemma shows that, on the event $W(1) \cap W(2) \cap \cdots \cap W(9)$, a desired lower bound for $L_{n} + L_{n}' - L_{2n}$ can be obtained. Here, we recall that $L_{n}$ and $L_{n}'$ are defined in \eqref{Ln} and in (D3) of Definition \ref{3-def1}, respectively.

\begin{lem}\label{3-lem3}
Take $\epsilon \in (0,1)$, $100 \le l \le 2M$ and $20 \le j \le l-20$. On $W(1) \cap W (2) \cap \cdots \cap W (9),$ we have 
$$ L_{n} + L_{n}' - L_{2n} \ge (1 - \epsilon ) l b_{n} \phi (n) - 6 b_{n}.$$
\end{lem}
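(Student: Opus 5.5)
The plan is to split all three loop-erasures along the cut times supplied by $W(4)$–$W(7)$ using \eqref{CUT-LE}. Most terms then cancel. What remains is to show that the LLF erasure of the middle piece $S[T,T']$ removes the long loop connecting $I^{1}(j)$ and $I^{2}(l-j)$ \emph{in its very first step}. After that step only about $4b_{n}$ steps survive, and \eqref{trivial} finishes the argument. This first-step property is specific to LLF erasure, which is presumably why the extra conditions were added to $W(4)$ and $W(5)$.

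\textbf{Step 1 (decomposition).} Let $T\in I^{1}(j+1)$ and $T'\in I^{2}(l-j+1)$ be the cut times up to $2n$ from $W(4),W(5)$. Let $U\in I^{1}(j-1)$ be the cut time up to $n$ from $W(6)$, and $U'\in I^{2}(l-j-1)$ the time from $W(7)$.
\begin{itemize}
\item Since $T\le n\le T'$, the time $T$ is also a cut time of $S[0,n]$, and $T'$ is a cut time of $S[n,2n]$.
\item By \eqref{CUT-LE}, $L_{2n}$ splits into the pieces $S[0,T]$, $S[T,T']$ and $S[T',2n]$.
\item $L_{n}$ splits into $S[0,T]$, $S[T,U]$ and $S[U,n]$.
\item $L_{n}'$ splits into $S[n,U']$, $S[U',T']$ and $S[T',2n]$.
\end{itemize}
The outer pieces cancel. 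Dropping the nonnegative terms coming from $S[T,U]$ and $S[U',T']$ gives
$$L_{n}+L_{n}'-L_{2n}\ge \mathrm{len}\,\mathrm{LE}^{\ast}(S[U,n])+\mathrm{len}\,\mathrm{LE}^{\ast}(S[n,U'])-\mathrm{len}\,\mathrm{LE}^{\ast}(S[T,T']).$$

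\textbf{Step 2 (the two positive terms).} The time $U$ is also a cut time of $S[t^{1}_{j-1},n]$. Applying \eqref{CUT-LE} and then \eqref{trivial} to $S[t^{1}_{j-1},U]$, which has length at most $b_{n}$, and using $W(8)$, we get
$$\mathrm{len}\,\mathrm{LE}^{\ast}(S[U,n])\ge (1-\epsilon)jb_{n}\phi(n)-b_{n}.$$
In the same way, $W(9)$ gives $\mathrm{len}\,\mathrm{LE}^{\ast}(S[n,U'])\ge(1-\epsilon)(l-j)b_{n}\phi(n)-b_{n}$. It therefore remains to prove $\mathrm{len}\,\mathrm{LE}^{\ast}(S[T,T'])\le 4b_{n}$.

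\textbf{Step 3 (first loop erased in $\lambda=S[T,T']$; the main step).}
\begin{itemize}
\item By $W(1)$, $\lambda$ contains a loop between $I^{1}(j)$ and $I^{2}(l-j)$ of length at least $(l-2)b_{n}$. Hence the first erased loop, at times $(a,b)$, satisfies $b-a\ge(l-2)b_{n}$.
\item Loops inside $S[T,n]$ have length at most $(j+1)b_{n}$, and loops inside $S[n,T']$ have length at most $(l-j+1)b_{n}$. Both are less than $(l-2)b_{n}$ because $20\le j\le l-20$. So $a<n<b$.
\item The disjointness $S[0,t^{1}_{j}]\cap S[n,2n]=\emptyset$ in $W(4)$ forces $a\ge t^{1}_{j}$. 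Likewise $S[0,n]\cap S[t^{2}_{l-j},2n]=\emptyset$ in $W(5)$ forces $b\le t^{2}_{l-j}$.
\item Write $a\in I^{1}(j')$ and $b\in I^{2}(k')$ with $j'\le j$ and $k'\le l-j$. Then $b-a\le (j'+k')b_{n}$, so $j'+k'\ge l-2$. This gives $a\in[t^{1}_{j},t^{1}_{j-3}]$ and $b\in[t^{2}_{l-j-3},t^{2}_{l-j}]$.
\item The extra condition in $W(4)$, namely $S[t^{1}_{j-1},t^{1}_{j-3}]\cap S[t^{2}_{l-j-3},t^{2}_{l-j}]=\emptyset$, excludes $a\ge t^{1}_{j-1}$. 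So $a\le t^{1}_{j-1}$.
\item Symmetrically, the extra condition in $W(5)$ excludes $b\le t^{2}_{l-j-1}$, so $b\ge t^{2}_{l-j-1}$. Endpoint coincidences only affect one boundary step and can be absorbed.
\end{itemize}
I expect the main care here to lie in this interval bookkeeping, including the boundary cases. The events $W(2)$ and $W(3)$ should not be needed for this step.

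\textbf{Step 4 (conclusion).} After the first erasure, $\lambda_{1}=S[T,a]\oplus S[b,T']$. Its length is at most $(t^{1}_{j-1}-t^{1}_{j+1})+(t^{2}_{l-j+1}-t^{2}_{l-j-1})=4b_{n}$. Since $\mathrm{LE}^{\ast}(\lambda)=\mathrm{LE}^{\ast}(\lambda_{1})$ by Definition \ref{LLFLE}, \eqref{trivial} gives $\mathrm{len}\,\mathrm{LE}^{\ast}(S[T,T'])\le4b_{n}$. Combining this with Steps 1 and 2 yields
$$L_{n}+L_{n}'-L_{2n}\ge(1-\epsilon)lb_{n}\phi(n)-6b_{n}.$$
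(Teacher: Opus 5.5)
Your proposal is correct and follows essentially the same route as the paper. It uses the same decomposition along $T,U,U',T'$ via \eqref{CUT-LE}, the same use of $W(8)$, $W(9)$ and \eqref{trivial} to lose only $2b_n$, and the same key fact: the first loop erased by the LLF procedure in $S[T,T']$ must join $I^{1}(j)$ to $I^{2}(l-j)$, which leaves at most $4b_n$ steps.

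The difference is only organizational. The paper runs a three-case analysis and identifies the erased loop as the leftmost maximal pair $(u_1,u_2)\in A$. You instead first confine the endpoints to $[t^{1}_{j},t^{1}_{j-3}]\times[t^{2}_{l-j-3},t^{2}_{l-j}]$ by the length bound, and then exclude the bad subintervals using the third conditions of $W(4)$ and $W(5)$. Those disjointness conditions on closed intervals already give strict inequalities, so your remark about endpoint coincidences is unnecessary.
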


\begin{proof}
We first observe that $b_{n}$ is larger than $20$, since $M = (\log n)^2 \ge 50$.

Suppose that $W(1) \cap W (2) \cap \cdots \cap W (9)$ occurs. We take cut times $T, T', U,$ and $U'$, whose existence is guaranteed in $W(4), W(5), W(6)$ and $W(7)$, respectively. 
It follows from \eqref{CUT-LE} that 
\begin{align}
&L_{n} + L_{n}' - L_{2n} \notag  \\
& = \text{len} \, \Big( \text{LE}^{\ast} \big( S[U, n ] \big) \Big) + \text{len} \, \Big( \text{LE}^{\ast} \big( S[n, U' ] \big) \Big) \notag \\
& \ \ \  + \text{len} \, \Big( \text{LE}^{\ast} \big( S[T, U ] \big) \Big) + \text{len} \, \Big( \text{LE}^{\ast} \big( S[U', T' ] \big) \Big) - \text{len} \, \Big( \text{LE}^{\ast} \big( S[T, T' ] \big) \Big) \notag \\
& \ge \text{len} \, \Big( \text{LE}^{\ast} \big( S[U, n ] \big) \Big) + \text{len} \, \Big( \text{LE}^{\ast} \big( S[n, U' ] \big) \Big) - \text{len} \, \Big( \text{LE}^{\ast} \big( S[T, T' ] \big) \Big) \notag \\
&= \text{len} \, \Big( \text{LE}^{\ast} \big( S[t^{1}_{j-1}, n ] \big) \Big) + \text{len} \, \Big( \text{LE}^{\ast} \big( S[n, t^{2}_{l-j-1} ] \big) \Big) \notag \\
& \ \ \ - \text{len} \, \Big( \text{LE}^{\ast} \big( S[t^{1}_{j-1}, U ] \big) \Big) - \text{len} \, \Big( \text{LE}^{\ast} \big( S[U', t^{2}_{l-j-1} ] \big) \Big) - \text{len} \, \Big( \text{LE}^{\ast} \big( S[T, T' ] \big) \Big) \notag \\ 
&\ge (1 -\epsilon ) l b_{n} \phi (n) - 2 b_{n} - \text{len} \, \Big( \text{LE}^{\ast} \big( S[T, T' ] \big) \Big), \label{key-lower}
\end{align}
where in the last inequality we used the fact that the conditions for $W(8)$ and $W(9)$ are satisfied and that the length of $\text{LE}^{\ast} \big( S[a, b] \big)$ is always bounded above by $b-a$.

We will show below that the length of $ \text{LE}^{\ast} \big( S[T, T'] \big) $ is bounded above by $4 b_{n}$. Given that the graph distance or the effective resistance between $S(T)$ and $S(T')$ on $S[T, T']$ is denoted by $X$, due to the occurrence of the event $W(1)$, the triangle inequality ensures that $X \le 4 b_{n}$. However, since $L_{n}$, unlike the graph distance or the effective resistance on $S[0,n]$, does not satisfy the properties of a metric, this is not obvious. In fact, this inequality for the length of  $\text{LE}^{\ast} \big( S[T, T'] \big)$ relies essentially on a property specific to the LLF loop-erasure, which does \emph{not} necessarily hold for the length of the chronological loop-erasure of $S[T, T']$.

Since $W(1)$ occurs, 
$$ A = \big\{ (u, u') \in I^{1} (j) \times I^{2} (l-j) \ : \ S (u) = S( u' ) \big\} \neq \emptyset. $$
We choose $(u_{1}, u_{2} ) \in A$ as follows:
\begin{itemize}
\item let 
$$ r_{1} = \max \big\{ u'- u \ : \ (u,u') \in A \big\}, $$
\item set 
$$ A(1) = \big\{ (u, u') \in A \, : \, u'-u = r_{1} \big\}, $$
\item let 
\begin{align}
&u_{1} = \min \big\{ u \in I^{1} (j) \, : \, \exists u' \in I^{2} (l-j) \, \text{ such that } \, (u, u') \in A(1) \big\}, \, \text{ and } \notag \\
& \ \ \ \ \ \ \ \ \ \ \ \ \ \ \ \ \ \ \ \ \ \  \text{choose } \, u_{2} \, \text{ so that } \, (u_{1}, u_{2} ) \in A(1). \label{const0}
\end{align}
\end{itemize}
That is, $(u_{1}, u_{2}) \in A$ is a pair in $A$ that maximizes the difference among all pairs in $A$, and among such pairs it is the leftmost. 

We aim to check that, in the LLF loop-erasure of $S[T, T']$, the loop $S[u_{1}, u_{2}]$ is the first loop to be erased.
To this end, by the second conditions of the events $W(4)$ and $W(5)$, 
\begin{align}
&\text{no pair $(u,u')$ satisfies either of the following two conditions:} \notag \\
& \ \  \ \ \ \ \text{$u \in [T, t^{1}_{j} ]$ and $u' \in [n, T']$ such that $S(u) = S(u')$,} \label{const1} \\
& \ \ \ \ \ \ \text{$u \in [T, n ]$ and $u' \in [t^{2}_{l-j}, T']$ such that $S(u) = S(u')$.}  \label{const2}
\end{align}
Recall that $T$ and $T'$ satisfy $T \in I^{1}(j+1)$ and $T' \in I^{2}(l-j+1)$, and that $l$ and $j$ satisfy $100 \le l \le 2M$ and $20 \le j \le l-20$. In this situation, when we perform the LLF loop-erasure for $S[T, T']$, let the first loop removed be $S[u, u']$, where $T \le u < u' \le T'$ are times such that $S(u)=S(u')$.
Depending on the relative positions of $(u,u')$ and $n$, the following three cases are possible. 

\vspace{3mm}

\underline{{\bf Case 1:}} $u < u' \le n. $

\vspace{2mm}

\hspace{-7mm} In this case, $u-u' \le (j+1)b_{n}$, while $u_{2}-u_{1} \ge (j-1)b_{n} + (l-j-1)b_{n} \ge (j+18)b_{n}$. This contradicts the definition of the LLF loop-erasure, which removes the largest loops first.

\vspace{3mm}

\underline{{\bf Case 2:}} $n \le u < u'. $

\vspace{2mm}

\hspace{-7mm} In this case, $u-u' \le (l-j+1)b_{n}$, while $u_{2}-u_{1} \ge (j-1)b_{n} + (l-j-1)b_{n} \ge (l-j+18)b_{n}$. This contradicts the definition of the LLF loop-erasure, which removes the largest loops first.

\vspace{3mm}

\underline{{\bf Case 3:}} $u < n < u'. $

\vspace{2mm}

\hspace{-7mm} Since there is no pair of times satisfying either \eqref{const1} or \eqref{const2}, it follows that $u > t^{1}_{j}$ and $u' < t^{2}_{l-j}$. Suppose that $u \in [ t^{1}_{j-1}, t^{1}_{j-3} ].$ By the third condition of $W(4)$ and the constraint \eqref{const2}, we have $u' < t^{2}_{l-j-3}.$ Thus, $u'- u \le (j-1) b_{n} + (l-j-3) b_{n} = (l-4) b_{n}$, while $u_{2}-u_{1} \ge (j-1)b_{n} + (l-j-1)b_{n} = (l-2) b_{n}.$ This contradicts the definition of the LLF loop-erasure, which removes the largest loops first. Similarly, using the third condition of $W(5)$ and the constraint \eqref{const1}, if $u' \in [ t^{2}_{l-j-3}, t^{2}_{l-j-1} ],$ we have $u' - u \le (j-3) b_{n} + (l-j-1) b_{n} = (l-4) b_{n}.$ This also leads to a contradiction. 

Next,  suppose that $u \in [t^{1}_{j-3}, n]$. Since $u' < t^{2}_{l-j}$, we have $u' - u \le (j-3) b_{n} + (l-j) b_{n} = (l-3) b_{n} < (l-2) b_{n} \le u_{2}- u_{1}$. This is a contradiction. Similarly, if $u' \in [n, t^{2}_{l-j-3}]$, since $u > t^{1}_{j}$,  we have $u'-u \le j b_{n} + (l-j-3) b_{n} = (l-3) b_{n} < (l-2 ) b_{n} \le u_{2} -u_{1}$, which leads to a contradiction again.

Hence, we must have $u \in I^{1}(j)$ and $u' \in I^{2}(l-j)$. Since $S(u) = S(u'),$ this implies $(u, u') \in A$. However, as in \eqref{const0}, $(u_{1},u_{2})$ is chosen as the pair in $A$ with the largest difference and the leftmost position. Therefore, by the definition of the LLF loop-erasure given in Definition \ref{LLFLE}, we must have $(u,u')=(u_{1},u_{2})$.

\vspace{3mm}

As a result, it has been verified that in the LLF loop-erasure of $S[T, T']$, the loop $S[u_{1}, u_{2}]$ is the first loop to be erased. That is, in Definition \ref{LLFLE}, when we take $\lambda = S[T, T']$, $\lambda_{1}$ in (ii) coincides with $S[T, u_{1}] \oplus S[u_{2}, T']$.
 Thus, we have
$$ \text{LE}^{\ast} \big( S[T, T'] \big) = \text{LE}^{\ast} \big( S[T, u_{1}] \oplus S[u_{2}, T'] \big). $$
Returning to \eqref{key-lower}, since $|T- u_{1}| + |T' - u_{2} | \le 4 b_{n},$ it follows that
$$ \text{len} \, \Big( \text{LE}^{\ast} \big( S[T, T' ] \big) \Big) \le 4 b_{n}, $$ as desired.
\end{proof}

\begin{rem}
We use the same notation as in the proof of Lemma \ref{3-lem3}. The length of the chronological loop-erasure of $S[T,T']$ is not necessarily small. This is because $S[u_{1},u_{2}]$ is not necessarily removed by the chronological loop-erasing procedure.   
\end{rem}

With Lemma \ref{3-lem3} in mind, we have
 \begin{align}
 &E (L_n + L_{n}' - L_{2n} ) \notag \\
 &= \sum_{l=2}^{2M} E \Big( L_n + L_{n}' - L_{2n} \ ; \  W(2) \Big) \notag \\
 &\ge \sum_{l=2}^{2M} E \Big( L_n + L_{n}' - L_{2n} \ ; \  W(2) \cap W (3) \Big) \notag \\
 &= \sum_{l=2}^{M} \sum_{j=1}^{l-1} E \Big( L_n + L_{n}' - L_{2n} \ ; \  W(1) \cap W(2) \cap W (3) \Big) \notag \\
 & \ \ \ \ \ + \sum_{l=M+1}^{2M}  \sum_{j=l-M}^{M} E \Big( L_n + L_{n}' - L_{2n} \ ; \  W(1) \cap W(2) \cap W (3) \Big) \notag \\
 &\ge \sum_{l=\epsilon M}^{M} \sum_{j=\epsilon l}^{(1-\epsilon)l} E \Big( L_n + L_{n}' - L_{2n} \ ; \  W(1) \cap W(2) \cap W (3) \Big) \notag \\
 & \ \ \ \ \ + \sum_{l=(1+\epsilon)M}^{2M}  \sum_{j=l-M}^{M} E \Big( L_n + L_{n}' - L_{2n} \ ; \  W(1) \cap W(2) \cap W (3) \Big) \notag \\
 &\ge \sum_{l=\epsilon M}^{M} \sum_{j=\epsilon l}^{(1-\epsilon)l} E \Big( L_n + L_{n}' - L_{2n} \ ; \  W(1) \cap W(2) \cap \cdots \cap  W (9) \Big) \notag \\
 & \ \ \ \ \ + \sum_{l=(1+\epsilon)M}^{2M}  \sum_{j=l-M}^{M} E \Big( L_n + L_{n}' - L_{2n} \ ; \  W(1) \cap W(2) \cap \cdots \cap W(9) \Big) \notag \\
 &\ge \sum_{l=\epsilon M}^{M} \sum_{j= \epsilon  l}^{(1 - \epsilon) l} \Big\{ (1- \epsilon) l b_{n} \phi (n) - 6 b_{n} \Big\} \notag  \, P \Big( W (1) \cap W (2) \cap \cdots \cap W (9) \Big) \notag  \\
 & \ \ \ \ \ \ + \sum_{l= (1+ \epsilon ) M}^{2M} \sum_{j= l-M}^{M} \Big\{ (1- \epsilon) l b_{n} \phi (n) - 6 b_{n} \Big\} \, P \Big( W (1) \cap W (2) \cap \cdots \cap W (9) \Big), \label{3-lower}
 \end{align}
 where we observe that if $\epsilon \in (0,1)$ is fixed and $n$ is taken sufficiently large, then $l$ and $j$ considered in the sum on the right-hand side of \eqref{3-lower} satisfy $100 \le l \le 2M$ and $20 \le j \le l-20$. (Recall that $M= (\log n)^{2}$, as defined in Definition \ref{3-def1}.) 

 To give a lower bound on the right-hand side of \eqref{3-lower}, we need the following lemma.

 \begin{lem}\label{3-lem4}
 There exists $r > 0$ such that the following holds. Take $\epsilon \in (0,1)$. There exist $C = C_{\epsilon} > 0$ and $N = N_{\epsilon} \ge 1$ depending on $\epsilon$ such that for $n \ge N$ and for $l$ and $j$ satisfying either
 $$ \epsilon M  \le l \le M \ \ \text{ and } \ \  \epsilon l \le j \le (1- \epsilon ) l $$
 or 
 $$ (1 + \epsilon ) M \le l \le 2M \ \ \text{ and } \ \  l-M \le j \le M,$$
 we have 
 $$ P \Big( W (1) \cap W (2) \cap \cdots \cap W (9) \Big) \ge P \big( W (1) \big) \Big\{ 1 - C (\log n)^{-\frac{1}{2}} (\log \log n)^{r} \Big\}.$$
 \end{lem}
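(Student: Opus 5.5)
The plan is to follow the proof of the corresponding estimate in Section 4.2 of \cite{exact} (for the events $A^{1},\dots,A^{11}$) and that of \cite{DS-SW-4d}. We show that, conditionally on $W(1)$, each of the events $W(2),\dots,W(9)$ fails with probability at most $C(\log n)^{-\frac12}(\log\log n)^{r}$. A union bound then gives
\begin{equation*}
P\big(W(1)\cap\cdots\cap W(9)\big)\ge P\big(W(1)\big)-\sum_{i=2}^{9}P\big(W(1)\setminus W(i)\big).
\end{equation*}
The natural order is first $W(2)$ and $W(3)$, then the cut time events $W(4)$–$W(7)$, then the length events $W(8)$ and $W(9)$.

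\textbf{The events $W(2)$ and $W(3)$.} Given $W(1)$, the event $W(2)^{c}$ means that some pair $I^{1}(j')\leftrightarrow I^{2}(k')$ with $j'+k'>l$ intersects. The event $W(3)^{c}$ means that a second pair with $j'+k'=l$ intersects. In four dimensions, two walk segments of length $b_n$ at time separation of order $lb_n$, with $l$ of order $M$ in the stated ranges, intersect with probability of order $(\log n)^{-1}$ times a factor depending on $l$. We use the separation lemma and the estimates on long-range intersection probabilities (as in \cite{exact}, relying on Lawler's four-dimensional results \cite{Lawb}). These give that, conditioned on one intersection, the chance of an additional intersection of another specified pair of blocks is $O((\log n)^{-1}\log\log n)$ per pair. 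Only $O(1)$ relevant neighboring pairs matter at leading order, and far pairs are summed using decay in $l$. The restrictions $\epsilon l\le j\le(1-\epsilon)l$ and $l\ge \epsilon M$, and similarly in the second range, keep all constants uniform in $\epsilon$ only.

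\textbf{The cut time events $W(4)$–$W(7)$.} The existence of cut times in a block fails with probability $O((\log n)^{-1}\log\log n)$ by Lemma 7.7.4 of \cite{Lawb}, as already used in the proof of \eqref{3-lem1-1}. This bound must survive conditioning on $W(1)$, which we handle with the same decoupling as in \cite{exact}. We condition on the intersection occurring far from the block in question, and use that the walk near that block is nearly independent of the event $W(1)$ up to a multiplicative $1+O((\log n)^{-1/2})$ factor.

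The additional non-intersection requirements in $W(4)$ and $W(5)$ are of two kinds. The first is $S[0,t^1_j]\cap S[n,2n]=\emptyset$ and its analogue; given $W(2)$ these are essentially implied, since an intersection there would create a pair with larger index sum. The second is $S[t^1_{j-1},t^1_{j-3}]\cap S[t^2_{l-j-3},t^2_{l-j}]=\emptyset$ and its analogue; these are intersections of blocks with index sum $l-1$ or less but near the intersecting pair. We bound them like $W(3)^{c}$: each is an extra intersection between specified blocks, costing $O((\log n)^{-1/2}(\log\log n)^{r})$ after conditioning.

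\textbf{The length events $W(8)$ and $W(9)$.} Here $S[t^1_{j-1},n]$ has length $(j-1)b_n$, and by \eqref{3-lem1-1} together with slow variation (Lemma \ref{3-lem2} and \eqref{slowvar}) its LLF length is at least $(1-\epsilon)jb_n\phi(n)$ except with probability $C_\epsilon(\log n)^{-1/2}\log\log n$. The factor $(j-1)/j$ is absorbed since $j\ge 20$ is large. Again we must decouple from $W(1)$: the segment $S[t^1_{j-1},n]$ is disjoint in time from $I^1(j)$, so we condition on the walk at $t^1_{j-1}$ and $n$ and argue as above.

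The main obstacle is this uniform decoupling: showing that conditioning on the rare event $W(1)$, whose probability is of order $(\log n)^{-1}$, distorts the probabilities of the other events only by a factor $1+O((\log n)^{-1/2}(\log\log n)^{r})$. I would try first to import the corresponding conditional estimates of \cite{exact} verbatim, since $W(4)$–$W(7)$ are cut time and non-intersection events identical in nature to theirs.
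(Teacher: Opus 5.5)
Your outline has the same skeleton as the paper's proof. You bound each $P(W(1)\cap W(i)^c)$ by $C\,P(W(1))(\log n)^{-1/2}(\log\log n)^r$, charge $(\log n)^{-1}$ up to $\log\log$ factors for every extra intersection, use Lemma 7.7.4 of \cite{Lawb} for local cut-time failure, and take the $(\log n)^{-1/2}$ bottleneck from \eqref{3-lem1-1} for $W(8),W(9)$. However, the decoupling step you flag as the main obstacle is where the proof actually lives, and your mechanism fails for the inner events $W(6)$--$W(9)$.

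For the outer blocks in $W(4),W(5)$ your idea is fine: the local no-cut-time event on $I^1(j+1)$ is exactly independent of $W(1)$, and the remaining conditions are extra intersections. The inner events are different. $W(6)^c$ is measurable with respect to $S[0,n]$, and $W(8)^c$ with respect to $S[t^1_{j-1},n]$. But $W(1)$ depends on this same stretch through the displacement $S(t^2_{l-j-1})-S(t^1_{j-1})$, wherever the intersection sits in $I^1(j)$. So conditioning on $W(1)$ tilts the law of $S[t^1_{j-1},n]$ by a factor of order one, not $1+O((\log n)^{-1/2})$.

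A naive decoupling also loses badly. Freezing $S[0,n]$ and paying only $(\log n)^{-1}(\log\log n)^r$ for the intersection gives $P(W(1)\cap W(6)^c)\lesssim(\log n)^{-2}$, whereas $P(W(1))\asymp_\epsilon(\log n)^{-5}$.

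The missing idea is a \emph{uniform} upper bound on the conditional probability of $W(1)$ given $S[0,n]$, of order $P(W(1))$ up to $\log\log$ powers. The paper gets it by reversing time at $t^1_{j-1}$ and multiplying two factors.
(a) Off an event of probability $(\log n)^{-10}$, the blocks $I^1(j)$ and $I^2(l-j)$ have diameter at most $\sqrt{b_n}(\log\log n)^2$. Then $W(1)$ forces $|S(t^2_{l-j-1})-S(t^1_{j-1})|\le 2\sqrt{b_n}(\log\log n)^2$. By the Markov property at time $n$ and the local CLT, this costs at most $C(l-j-1)^{-2}(\log\log n)^8\le C_\epsilon(\log n)^{-4}(\log\log n)^8$, whatever $S[0,n]$ is.
(b) Starting at distance at least $\sqrt n(\log n)^{-30}$, the block $I^2(l-j)$ hits the frozen $I^1(j)$ with conditional probability at most $Y^*\le(\log n)^{-1}(\log\log n)^r$. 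This holds off another event of probability $(\log n)^{-10}$, by Propositions 4.1 and 4.3 of \cite{slowly}.
Multiplying by $P(W(6)^c)\lesssim(\log n)^{-1}\log\log n$ or $P(W(8)^c)\lesssim(\log n)^{-1/2}\log\log n$ gives \eqref{3-lem4-1}. What is needed is a bounded (polylog-log) Radon--Nikodym factor, not near-independence. Saying you would import \cite{exact} verbatim leaves exactly this step unproved.

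There are also some smaller errors.

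The condition $S[0,t^1_j]\cap S[n,2n]=\emptyset$ in $W(4)$ is not implied by $W(2)$. An intersection $I^1(j')\leftrightarrow I^2(k')$ with $j'\ge j+1$ and $j'+k'\le l$ is compatible with $L^\ast=l$, so this condition must be bounded as an extra-intersection event. The paper's computation for $i=2$ bounds precisely $P(W(1)\cap\{S[0,t^1_j]\cap S[n,2n]\neq\emptyset\})$. It freezes the forward path from $t^1_j$, on which $W(1)$ is measurable, and bounds the chance that the reversed past hits $S[n,2n]$ by the random variable $Y$ of \cite{slowly}.

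That same device disposes of $W(2)^c$ and $W(3)^c$ all at once. Your per-pair summation over $\Theta(M^2)$ pairs would instead need decay estimates conditional on $W(1)$, which the unconditional $l^{-2}$ decay does not supply.

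Finally, absorbing $(j-1)/j$ in $W(8)$ requires $j\gtrsim\epsilon^{-1}$. This follows from $j\ge\epsilon^2M$, not from $j\ge 20$.
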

 
 \begin{proof}
 It suffices to show that for each $2 \le i \le 9$, 
 \begin{equation}\label{3-lem4-1}
 P \Big( W(1) \cap W(i)^{c} \big) \le C P \big( W(1) \big)  (\log n)^{-\frac{1}{2}} (\log \log n)^{r} .
 \end{equation}
It follows from Proposition 2.3 in \cite{DS-SW-4d} that the probability of the event $W(1)$ satisfies the following equality: 
\begin{equation}\label{W1prob}
P \big( W(1) \big) = \frac{1}{2 \log n }  \log \Big[ 1 + \frac{1}{l (l-2)} \Big] \Big\{ 1 + O \Big(   l^{-1} (\log n)^{-\frac{3}{25}} \Big) \Big\} \ \ \  \text{ for } \ \ l \ge 3,
\end{equation}
where the implicit constant in $O$ does not depend on $l$, $j$ and $n$. 
In particular, with the assumption for $l$, we have 
\[
    P(W(1))\asymp l^{-2}(\log n)^{-1}\asymp_{\epsilon} (\log n)^{-5}.
\]
Let $k = l-j$. 
 
We begin with $i=2$. Suppose that $W(1) \cap W(2)^{c}$ occurs. This implies $L^{\ast} \ge l+1$. Therefore, either 
$$ S[0, t^{1}_{j}] \cap S[n, 2n] \neq \emptyset \ \ \text{ or } \ \ S[ 0, n] \cap S[ t^{2}_{k}, 2n ] \neq \emptyset$$
occurs. 
To give an upper bound of 
\[
    P \Big( W(1) \cap \big\{ S[0, t^{1}_{j}] \cap S[n, 2n] \neq \emptyset \big\} \Big),
\] 
we apply Propositions 4.1 and 4.3 of \cite{slowly} as follows. (We only handle the probability of the first event since that of the latter event can be obtained using the same argument.) 
Let $S^i~(i=1,2)$ be independent simple random walks on $\mathbb{Z}^4$ and let $P^x_i$ be the law of $S^i$ conditioned $S^i(0)=x$. 
We also denote by $P$ the joint law of $S^1$ and $S^2$ conditioned that both walks start at the origin. 
By Theorem 1.2.1 and Proposition 1.5.10 of \cite{Lawb}, if we define 
\[
    \Lambda_1=\left\{\mathrm{dist}(S^1(b_n),S^2[0, 2n])\ge \sqrt{n}(\log n)^{-30}\right\},
\]
then we have 
\[
    P(\Lambda_1^c)\le C(\log n)^{-10},
\]
for some $C>0$. 
Now we define an $S^{2}[0, 2n]$-measurable random variable $Y$ by
\[
    Y=\max\left\{P_1^x(S^1[0,n]\cap S^2[0, 2n]\neq\emptyset)\mathrel{:}x\in\mathbb{Z}^4,~\mathrm{dist}(x,S^2[0, 2n])\ge \sqrt{n}(\log n)^{-30}\right\},
\]
where $\mathrm{dist}(A,B)$ denotes the distance between sets $A$ and $B$ with respect to the Euclidean metric on $\mathbb{R}^4$. 
Propositions 4.1 and 4.3 of \cite{slowly} guarantees that there exists $r>0$ such that 
\[
    P_2^0\left(Y\ge \frac{(\log\log n)^r}{\log n}\right)\le C(\log n)^{-10}.
\]
Let $\Lambda_2=\{Y\le (\log n)^{-1}(\log\log n)^r\}$ and note that $\Lambda_2$ is measurable with respect to $S^2[0, 2n]$. 
By the Markov property at $t_{j-1}^1$ and the reversibility, we have 
\begin{align}
    P&\big(W(1) \cap \big\{S[0, t^{1}_{j}] \cap S[n, 2n] \neq \emptyset \big\}\big) \notag\\
    &\le P\left(W(1)\cap \big\{S[0, t^{1}_{j}] \cap S[t^{1}_{j-1}, 2n] \neq \emptyset \big\}\cap \big\{\mathrm{dist}(S(t^1_j),S[t^{1}_{j-1},2n])\ge \sqrt{n}(\log n)^{-30}\big\}\right) \notag\\
    &\qquad +P(\Lambda_1^c) \notag\\
    &\,\le C P \big( I^1(j)\leftrightarrow I^2(l-j) \big) (\log n)^{-1}(\log \log n)^r     +P(\Lambda_1^c)+P(\Lambda_2^c)  \notag\\
    &\le CP(W(1)) \, (\log n)^{-1}(\log \log n)^r.\label{ineq-1-cap-2c}
\end{align}
For $i=3$, note that 
\begin{align*}
    W(1)\cap W(3)^c
        =&\left(\{I^1(j)\leftrightarrow I^2(l-j)\}\cap\{\exists j'\neq j\mbox{ such that }I^1(j')\leftrightarrow I^2(l-j')\}\right) \\
        &\quad \cup\left\{\begin{gathered}
        \exists l'>l,\exists j',j''\mbox{ with }j'<j''\mbox{ such that }    \\
        I^1(j')\leftrightarrow I^2(l'-j'),I^1(j'')\leftrightarrow I^2(l'-j'')\end{gathered}\right\},
\end{align*}
and each event in the union on the right-hand side can be handled by the same argument as \eqref{ineq-1-cap-2c}. 

For $i=4$, let \[
    \Lambda_3=\left\{\mbox{There is no cut time of }S[t_{j+1}^1,t_j^1]\mbox{ in }[t_{j+1}-b_n(\log b_n)^{-6},t_j^1]\right\},
\]
and note that 
\[
    \left\{\mbox{There is no cut time up to }2n\mbox{ in }I^1(j+1)\right\}
        \subset \Lambda_3 \cup \{S[0,t_{j+1}^1]\cap S[t_j^1,2n)\neq\emptyset\}.
\]
Thus we have 
\begin{align}
    P(W(1)\cap W(4)^c)&\le P(W(1)\cap \Lambda_3)+P(W(1)\cap\{S[0,t_{j+1}^1]\cap S[t_j^1,2n]\neq\emptyset\})\notag \\
        &\quad + P(W(1)\cap\{S[0,t_j^1]\cap S[n,2n]\neq\emptyset\})\notag    \\
        &\quad + P(W(1)\cap\{S[t^1_{j-1},t_{j-3}^1]\cap S[t^2_{l-j-3},t^1_{l-j}]\neq\emptyset\}).\label{ineq-1-cap-4c}
\end{align}
Since $W(1)$ and $\Lambda_3$ are independent, the first term on the right-hand side of \eqref{ineq-1-cap-4c} is bounded above by 
\[
    P(W(1))P(\Lambda_3)\le CP(W(1))(\log n)^{-1}\log \log n,
\]
using Lemma 7.7.4 of \cite{Lawb}. 
The remaining terms can be handled the same way as we did for \eqref{ineq-1-cap-2c}. Note that this strategy also works for the case $i=5$. 

For $i=6,7,8,9$, we first deal with $W(6)$. 
Note that defining 
\begin{gather*}
    W^*=\{S^1[0,b_n]\cap S^2[(l-2)b_n,(l-1)b_n]\neq\emptyset\}, \\
    W^{**}=\left\{\begin{gathered}\forall U\in[b_n-b_n(\log b_n)^{-6},b_n],
    \\ (S^1[0,n-(j-1)b_n]\cap S^2[0,U])\cap(S^2[U+1,(j-1)b_n])\neq \emptyset\end{gathered}\right\},
\end{gather*}
we have 
\[
    P(W(1)\cap W(6)^c)=P(W^*\cap W^{**}). 
\]
Our strategy for this case is again to apply Propositions 4.1 and 4.3 of \cite{Lawb}. 
However, in the straightforward use of these propositions, that is, by ``freezing" $S^1[0,b_n]$, the product of $(\log n)^{-1}(\log \log n)^r$ and $P(W^{**})$ does not guarantee our desided bound (recall \eqref{W1prob}). 
With this in mind, let us also define 
\begin{gather*}
    \Lambda_4=\left\{\mathrm{diam}(S^1[0,b_n])\vee \mathrm{diam}(S^2[(l-2)b_n,(l-1)b_n])\le \sqrt{b_n}(\log \log n)^2\right\},\\
    \Lambda_5=\{|S^2((l-2)b_n)|\le 2\sqrt{b_n}(\log\log n)^2\},
\end{gather*}
so that we have 
\[
    W^*\cap \Lambda_4\subset \Lambda_5;\quad P(\Lambda_4^c)\le (\log n)^{-10},
\]
by Lemma 1.5.1 of \cite{Lawb}. 
Now we will apply the argument of \cite{slowly} with a slight modification. 
Let 
\begin{gather*}
    \Lambda^*_1=\left\{\mathrm{dist}(S^2((l-2)b_n),S^1[0,b_n])\ge \sqrt{n}(\log n)^{-30}\right\},\\
    \Lambda^*_2=\{Y^*\le (\log n)^{-1}(\log \log n)^r\},
\end{gather*}
where we set 
\[
    Y^*=\max\left\{P_2^x(S^1[0,b_n]\cap S^2[0,b_n]\neq\emptyset)\mathrel{:}~
    \begin{gathered}x\in\mathbb{Z}^4,\\
    \mathrm{dist}(x,S^1[0,b_n])\ge \sqrt{n}(\log n)^{-30}
    \end{gathered}\right\}.
\]
By the same argument as the case $i=2$, we have 
\[
    P(\Lambda_1^*)\le C(\log n)^{-10};\quad P^0_1\left(Y^*\ge \frac{(\log \log n)^r}{\log n}\right)\le C(\log n)^{-10},
\]
Thus we have 
\begin{align*}
    P(W(1)\cap W(6)^c)&\le P(W^*\cap W^{**}\cap \Lambda_1^*\cap \Lambda_2^*\cap \Lambda_4)+C(\log n)^{-10} \\
        &\le P(W^*\cap W^{**}\cap \Lambda_1^*\cap \Lambda_2^*\cap \Lambda_5)+C(\log n)^{-10}  \\
        &\le \frac{(\log \log n)^r}{\log n}\times P(W^{**}\cap \Lambda_5)+C(\log n)^{-10}  \\
        &\le \frac{(\log \log n)^r}{\log n}\max_{x\in \mathbb{Z}^4}P^x(|S((l-j-1)b_n)|\le 2\sqrt{b_n}(\log\log n)^2)    \\
        &\quad \times P(W^{**})+C(\log n)^{-10}, 
\end{align*}
where the third and the last inequality follows from the application of the Markov property to $S^2$ at time $(l-2)b_n$ and $(j-1)b_n$, respectively. 
Moreover, from Theorem 1.2.1 of \cite{Lawb} and the assumption on $l$ and $j$, it follows that 
\begin{align*}
    \max_{x\in \mathbb{Z}^4}P^x(|S((l-j-1)b_n)|&\le 2\sqrt{b_n}(\log\log n)^2) \le C\frac{b_n^2(\log \log n)^8}{(l-j-1)^2b_n^2} \\
    &\le C(\log n)^{-4}(\log \log n)^8.
\end{align*}
Combining this with Lemma 7.7.4 of \cite{Lawb}, we obtain 
\[
    P(W(1)\cap W(6)^c)\le \frac{C(\log\log n)^{r+9}}{(\log n)^6}\le CP(W(1))(\log n)^{-1}(\log\log r)^{r+9}.
\]
The same argument covers the case $i=7$. For $i=8$, let 
\[
    W^{***}=\left\{\mathrm{len}(\mathrm{LE}^*(S^2[0,(j-1)b_n]))\ge (1 -\epsilon) j b_{n}\phi(n)\right\}.
\]
Recalling the argument above, we have 
\begin{align*}
    P(W(1)\cap W(8)^c)
        &\le P(W^*\cap W^{***}\cap \Lambda_1^*\cap \Lambda_2^*\cap \Lambda_5)+C(\log n)^{-10}  \\
        &\le \frac{(\log \log n)^r}{\log n}\times P(W^{***}\cap \Lambda_5)+C(\log n)^{-10}  \\
        &\le \frac{(\log \log n)^{r+8}}{(\log n)^5}P(W^{***})+C(\log n)^{-10}.
\end{align*}
Now \eqref{3-lem1-1} gives 
\[
    P(W^{***})\le C(\log n)^{-\frac{1}{2}}\log\log n,
\]
and together with the above inequality, we obtain \eqref{3-lem4-1} for $i=8$, so do we for $i=9$. This finishes the proof. 
\end{proof}

We are now ready to prove the condition (A2) of Lemma \ref{exactlem}. The following lemma establishes this.

 \begin{lem}\label{last}
 There exists a universal constant $C>0$ such that the following holds. Let $\epsilon \in (0,1)$. Then there exists $N=N_{\epsilon} \ge 1$ depending on $\epsilon$ such that for all $n \ge N$,
 $$ E (L_{n} + L_{n}' - L_{2n} ) \ge (1 -  C  \epsilon ) n (\log n)^{-1} \phi (n) \log 2.$$

Consequently, by dividing both sides by $2n$, the condition \eqref{A2} holds.
 
\end{lem}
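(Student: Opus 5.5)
The plan is to start from the lower bound \eqref{3-lower} and plug in Lemma \ref{3-lem4} together with the asymptotics \eqref{W1prob} for $P(W(1))$. For $l$ in the ranges appearing in \eqref{3-lower}, we have $l \ge \epsilon M = \epsilon(\log n)^2$. Since $\phi(n) \ge c(\log n)^{-1/2}$ by \eqref{3-eq-2}, the correction term $6b_n$ is negligible compared with $\epsilon l b_n \phi(n)$ once $n$ is large. So each summand is at least
\[
(1-2\epsilon)\, l\, b_n\, \phi(n)\, P(W(1))\,\Big\{1 - C(\log n)^{-\frac12}(\log\log n)^r\Big\}.
\]
By \eqref{W1prob}, and using $\log\big(1+\tfrac{1}{l(l-2)}\big) = l^{-2}\{1+O(l^{-1})\}$ with $l\to\infty$ uniformly in the range, we get
\[
P(W(1)) = \frac{1}{2\log n}\, l^{-2}\,\{1+o(1)\},
\]
uniformly in $j$ and in $l$ in the range.

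Next I will evaluate the double sums. In the first range, for fixed $l$ the number of admissible $j$ is about $(1-2\epsilon)l$. The contribution is therefore
\[
\sum_{l=\epsilon M}^{M} (1-2\epsilon)l \cdot l\, b_n \phi(n)\,\frac{1}{2 l^{2}\log n}
\approx (1-2\epsilon)\,\frac{b_n\phi(n)}{2\log n}\,(1-\epsilon)M .
\]
Since $b_n M = n$, this is of order $n\phi(n)/(2\log n)$ times $(1-C\epsilon)$.

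In the second range, for fixed $l$ the number of admissible $j$ is $2M-l+1$. The contribution is
\[
\frac{b_n\phi(n)}{2\log n}\sum_{l=(1+\epsilon)M}^{2M}\frac{2M-l}{l}
\approx \frac{n\phi(n)}{2\log n}\int_{1+\epsilon}^{2}\frac{2-x}{x}\,dx .
\]
Adding the two pieces gives
\[
\frac{n\phi(n)}{2\log n}\Big(1+\int_1^2\frac{2-x}{x}\,dx - O(\epsilon)\Big)
=\frac{n\phi(n)}{2\log n}\big(1+2\log 2-1-O(\epsilon)\big)
=\frac{n\phi(n)\log 2}{\log n}\,(1-C\epsilon).
\]
This is the claimed bound. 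I will replace the sums by integrals with Riemann-sum error $O(1/M)$. I will also absorb the factor $1-C(\log n)^{-1/2}(\log\log n)^r$ and the $o(1)$ from \eqref{W1prob} into $\epsilon$ for $n\ge N_\epsilon$.

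For the "consequently" part, I will use $E(L_n')=E(L_n)$, which gives $E(L_{2n}) \le 2n\phi(n) - (1-C\epsilon)\, n\phi(n)\log 2/\log n$. Dividing by $2n$ yields
\[
\phi(2n) \le \phi(n)\Big\{1-\frac{\log 2}{2\log n}(1-C\epsilon)\Big\}.
\]
Replacing $\epsilon$ by $\epsilon/C$ gives \eqref{A2}.

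The main obstacle is bookkeeping rather than any conceptual step. I need to make sure the errors in \eqref{W1prob} and Lemma \ref{3-lem4} are uniform in $(l,j)$, so that they factor out of the sum. I also need the term $-6b_n$ to be dominated; this uses $l\phi(n)\ge c\epsilon(\log n)^{3/2}\to\infty$.
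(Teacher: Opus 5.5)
Your proposal is correct and is essentially the paper's own proof. You start from \eqref{3-lower}, insert Lemma \ref{3-lem4} and the asymptotics \eqref{W1prob} (which the paper packages as $P(W(1))\ge (1-\epsilon)/(2l^{2}\log n)$), and evaluate the two double sums, which give $\tfrac12$ and $\log 2-\tfrac12$ up to $O(\epsilon)$ in units of $n\phi(n)(\log n)^{-1}$; the only cosmetic difference is that you absorb the $-6b_n$ term into the coefficient via $\epsilon l\phi(n)\to\infty$, whereas the paper bounds it separately in (S1)--(S2).

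One caveat you share with the paper: both arguments take \eqref{3-lower} for granted, and its term-dropping steps tacitly use $L_n+L_n'-L_{2n}\ge 0$ on the discarded events, which (unlike for $D_n$ and $R_n$, where it is the triangle inequality) can fail pointwise for LLF loop-erasure, because a straddling loop that is larger than, but only partly overlaps, the big loops of $S[0,n]$ and $S[n,2n]$ can leave behind pieces that the separate erasures remove — so any extra justification needed lies in that step, not in your computation.
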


\begin{proof}

Let $C_{\epsilon}$ and $N_{\epsilon}$ be as in Lemma \ref{3-lem4}. By replacing $N_{\epsilon}$, if necessary, with a larger value depending on $\epsilon$, we may assume that 
\begin{equation}\label{largen}
    \epsilon M \ge 100,  \ \ \  \epsilon^{2} M \ge 20 \ \ \ \text{and} \ \ \  P \big( W(1) \big) \ge \frac{1 - \epsilon}{2 l^{2} \log n}
\end{equation}
whenever $n \ge N_{\epsilon}$ and $l \ge \epsilon M$. 
Combining Lemma \ref{3-lem3}, \eqref{3-lower} and Lemma \ref{3-lem4}, if $n \ge N_{\epsilon}$, we have 
\begin{align*}
&E (L_n + L_{n}' - L_{2n} ) \notag \\
 &\ge \sum_{l=\epsilon M}^{M} \sum_{j= \epsilon  l}^{(1 - \epsilon) l} \Big\{ (1- \epsilon) l b_{n} \phi (n) - 6 b_{n} \Big\}   \, P \Big( W (1) \cap W (2) \cap \cdots \cap W (9) \Big) \notag  \\
 &+ \sum_{l= (1+ \epsilon ) M}^{2M} \sum_{j= l-M}^{M} \Big\{ (1- \epsilon) l b_{n} \phi (n) - 6 b_{n} \Big\} \, P \Big( W (1) \cap W (2) \cap \cdots \cap W (9) \Big) \\
 &\ge \sum_{l=\epsilon M}^{M} \sum_{j= \epsilon  l}^{(1 - \epsilon) l} \Big\{ (1- \epsilon) l b_{n} \phi (n) - 6 b_{n} \Big\}   \, P \big( W (1) \big) \Big\{ 1 - C_{\epsilon} (\log n)^{-\frac{1}{2}} (\log \log n)^{r} \Big\} \\
 &+ \sum_{l= (1+ \epsilon ) M}^{2M} \sum_{j= l-M}^{M} \Big\{ (1- \epsilon) l b_{n} \phi (n) - 6 b_{n} \Big\} \, P \big( W (1) \big) \Big\{ 1 - C_{\epsilon} (\log n)^{-\frac{1}{2}} (\log \log n)^{r} \Big\},
\end{align*}
for some universal constant $r >0$. Here, since the first two conditions in \eqref{largen} are satisfied, observe that the indices $l$ and $j$ in the above sums satisfy $100 \le l \le 2M$ and $20 \le j \le l - 20$.

Substituting the lower bound for the probability of $W(1)$ given in \eqref{largen} into this, the right-hand side above is bounded below by 
\begin{align*}
& \sum_{l=\epsilon M}^{M} \sum_{j= \epsilon  l}^{(1 - \epsilon) l} \Big\{ (1- \epsilon) l b_{n} \phi (n) - 6 b_{n} \Big\}  \frac{1- \epsilon}{2 l^{2}}  (\log n)^{-1} \Big\{ 1 - C_{\epsilon} (\log n)^{-\frac{1}{2}} (\log \log n)^{r} \Big\} \\
 &+ \sum_{l= (1+ \epsilon ) M}^{2M} \sum_{j= l-M}^{M} \Big\{ (1- \epsilon) l b_{n} \phi (n) - 6 b_{n} \Big\} \, \frac{1- \epsilon}{2 l^{2}}  (\log n)^{-1} \Big\{ 1 - C_{\epsilon} (\log n)^{-\frac{1}{2}} (\log \log n)^{r} \Big\}.
\end{align*}
By carefully evaluating these sums, we see that 
\begin{align*}
&\text{(S1) \ \  \ \ }\sum_{l=\epsilon M}^{M} \sum_{j= \epsilon  l}^{(1 - \epsilon) l}      \frac{b_{n}}{2 l^{2}}  (\log n)^{-1} \le C n (\log n)^{-3} \log \epsilon^{-1}, \\
&\text{(S2) \ \  \ \ }\sum_{l= (1+ \epsilon ) M}^{2M} \sum_{j= l-M}^{M}      \frac{b_{n}}{2 l^{2}}  (\log n)^{-1} \le C n (\log n)^{-3}, \\
&\text{(S3)  \ \ \ \ } \sum_{l=\epsilon M}^{M} \sum_{j= \epsilon  l}^{(1 - \epsilon) l}  (1- \epsilon) l b_{n} \phi (n)   \frac{1- \epsilon}{2 l^{2}}  (\log n)^{-1}  \\ 
& \ \ \ \  \ \ \ \ \ \ \ \ \ \ \ \  = \frac{(1- \epsilon)^{3} (1- 2 \epsilon)}{2} n \phi (n) (\log n)^{-1} + O \Big( n \phi (n) (\log n)^{-3} \log \epsilon^{-1} \Big), \\
&\text{(S4)  \ \ \ \ } \sum_{l= (1+ \epsilon ) M}^{2M} \sum_{j= l-M}^{M}  (1- \epsilon) l b_{n} \phi (n)  \frac{1- \epsilon}{2 l^{2}}  (\log n)^{-1}  \\ 
& \ \ \ \  \ \ \ \ \ \ \ \ \ \ \ \  = \frac{(1 - \epsilon)^{2}}{2} \Big\{ 2 \log  \frac{2}{1 + \epsilon} - (1 - \epsilon) \Big\} n \phi (n) (\log n)^{-1} + O \Big( n \phi (n) (\log n)^{-3}   \Big).
\end{align*}
Note that the term $C_{\epsilon} (\log n)^{-\frac{1}{2}} (\log \log n)^{r}$ can be neglected.
If we add the sum in (S3) to the sum in (S4) and $n$ is taken sufficiently large depending on $\epsilon$, we see that $E (L_{n} + L_{n}' - L_{2n})$ is bounded below by
$$ (1 -  C  \epsilon ) n (\log n)^{-1} \phi (n) \log 2$$
for some universal constant $C >0$, as desired.
\end{proof}

\vspace{3mm}

\begin{proof}[Proof of Theorem \ref{main}]
The lower bound follows from \eqref{cutbound-1}. For the upper bound, Lemma \ref{3-lem2} and Lemma \ref{last} allow us to apply Lemma \ref{exactlem}, which yields the desired upper bound.
\end{proof}

\section{Convergence to Brownian motion}

In this section, we prove that LLF LERW, when appropriately rescaled in space and time, converges to four-dimensional Brownian motion. We begin by introducing the necessary notation.

We use the following partition of the time interval $[0,n]$.
\begin{dfn}\label{convdef}
Let $e_{n} = n (\log n)^{-\frac{1}{8}}$ and $K_{n} = (\log n)^{\frac{1}{8}}$. We partition the time interval $[0,n]$ into $K_{n}$ subintervals of equal length $e_{n}$ as follows:
\begin{itemize}
    \item For $j = 0, 1, \cdots, K_{n}$, let $s_{j} = j e_{n}$. 
    \item For $j = 1, 2, \cdots, K_{n}$, set 
    $$\gamma_{j} = \text{LE}^{\ast} \big( S [s_{j-1}, s_{j} ] \big) \ \ \ \text{ and }  \ \ \ L_{j} = \text{len} \, \big( \gamma_{j} \big).$$
    
\end{itemize}

\end{dfn}

Recall that $X_{n}(t)$ is defined in \eqref{xnt}.
The aim of this section is to prove the following theorem.

\begin{thm}
    Let $d=4$. As $n \to \infty$, 
    $\big( X_{n}(t) \big)_{t \in [0,1]} $ converges to a standard Brownian motion $\big( B(t) \big)_{t \in [0,1]}$ in $\mathbb{R}^{4}$ started at the origin.
\end{thm}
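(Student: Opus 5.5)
We follow the strategy used for chronological LERW in four dimensions (Lawler's argument in Section 7.7 of \cite{Lawb}): the path $\gamma$ moves through the random walk at a nearly deterministic speed, namely the time parametrization of $\gamma$ by its own length is close to $t \mapsto t/\phi(n)$ in walk time. Since $2S(ns)/\sqrt{n}$ converges to $B(s)$ in $\mathbb{R}^4$ (the factor $2=\sqrt{d}$ normalizes the covariance), a uniform time-change estimate will transfer Donsker's theorem to $X_n$. We use the partition of Definition \ref{convdef}. The block length $e_n = n(\log n)^{-1/8}$ is chosen so that $K_n$ grows slowly. Then the errors from failed cut-time decompositions and from fluctuations, which are polylogarithmically small by \eqref{3-lem1-1}, survive a union bound over the $K_n$ blocks.

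\textbf{Step 1 (splitting into blocks).} Let $G$ be the event that for every $j=1,\dots,K_n-1$ there is a cut time (up to $n$) in $[s_j - a'_n, s_j]$ and one in $[s_j, s_j + a'_n]$, where $a'_n = n(\log n)^{-1}$, say. By Lemma 7.7.4 of \cite{Lawb}, each of these intervals fails to contain a cut time with probability at most $C(\log n)^{-1}\log\log n$ (possibly times a power of $\log\log n$). A union bound over $K_n=(\log n)^{1/8}$ blocks gives $P(G^c)\to 0$. On $G$, repeated use of \eqref{CUT-LE} at these cut times $\tau_j$ writes $\gamma$ as the concatenation $\mathrm{LE}^{\ast}(S[\tau_0,\tau_1])\oplus\cdots$. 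This concatenation agrees with $\gamma_1\oplus\cdots\oplus\gamma_{K_n}$ except for pieces of walk-length $O(a'_n)$ near each $s_j$. In particular the point of $\gamma$ at length $\sum_{i\le j} L_i + O(j a'_n)$ lies within $O(\max \text{local walk displacement over time } a'_n)$ of $S(s_j)$.

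\textbf{Step 2 (law of large numbers for block lengths).} By \eqref{3-lem1-1} applied at scale $e_n$, together with the slow variation \eqref{slowvar}-type estimate $\phi(e_n)=\phi(n)(1+o(1))$, each $L_j$ satisfies $|L_j - e_n\phi(n)|\le \epsilon e_n\phi(n)$ except with probability $C(\log n)^{-1/2}\log\log n$. A union bound over $K_n$ blocks still gives $o(1)$. Hence, with high probability, $\big|\sum_{i\le j}L_i - s_j\phi(n)\big|\le 2\epsilon s_j\phi(n)+O(ja'_n)$ for all $j$, and $|L_n-n\phi(n)|\le 2\epsilon n\phi(n)$.

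\textbf{Step 3 (uniform comparison).} We show that $\sup_{t\in[0,1]}|X_n(t)-2S(nt)/\sqrt n|\to 0$ in probability, which yields the theorem by Donsker's invariance principle and Slutsky's lemma. For $t$ with $n\phi(n)t$ in the $j$-th block of $\gamma$, Step 2 places $\gamma(n\phi(n)t)$ inside $\mathrm{LE}^\ast(S[s_{j-1},s_j])$ up to boundary effects, hence in $S[s_{j-1}-O(\epsilon n),s_j+O(\epsilon n)]$. The modulus of continuity of $S$ then gives $|\gamma(n\phi(n)t)-S(nt)|\le \sqrt n\,\omega(\epsilon + (\log n)^{-1/8})$ with high probability, where $\omega(\delta)\to0$ as $\delta \to 0$. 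Near $t=1$, the truncation $\min\{n\phi(n)t, L_n\}$ is handled by the bound on $|L_n - n\phi(n)|$. Letting $n\to\infty$ and then $\epsilon\to0$ finishes the argument.

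\textbf{Main obstacle.} The delicate point is Step 2: controlling all $K_n$ blocks simultaneously. The tail bound \eqref{3-lem1-1} only decays like $(\log n)^{-1/2}\log\log n$, so $K_n$ must satisfy $K_n(\log n)^{-1/2}\log\log n\to0$. At the same time, $e_n$ must stay comparable to $n$ up to logarithmic factors so that $\phi(e_n)\sim\phi(n)$. The choice $K_n=(\log n)^{1/8}$ meets both constraints.
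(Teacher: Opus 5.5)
Your proposal is correct and follows essentially the same route as the paper. It uses the same partition into $K_n=(\log n)^{1/8}$ blocks of length $e_n$, cut times near each $s_j$ via Lemma 7.7.4 of \cite{Lawb} together with \eqref{CUT-LE}, a union bound over blocks for the length concentration from \eqref{3-lem1-1}/\eqref{diffest} (with $\phi(e_n)\sim\phi(n)$), and the modulus of continuity of $S$. The differences are cosmetic: you fix $\epsilon$, send $\epsilon\to0$ at the end and conclude via Donsker plus a converging-together argument, whereas the paper takes $\epsilon=(\log n)^{-1/4}$ and uses the strong coupling of Lemma 3.1 of \cite{Lawcut}, which gives the explicit rate $(\log n)^{-1/32}$.
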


\begin{proof}
Recall that $a_{n} = n (\log n)^{-6}$. Using the notation introduced in Definition \ref{convdef}, for each $j = 1, 2, \cdots, K_{n}$, let $V (j)$ denote the event that there exists a cut time up to $n$ in both intervals $[s_{j} - a_{n}, s_{j}]$ and $[s_{j}, s_{j} + a_{n}]$. By Lemma 7.7.4 of \cite{Lawb}, we have 
$$ P ( V(j) ) \ge 1 - C (\log n)^{-1} \log \log n,$$
    for all $j = 1,2, \cdots , K_{n}.$ Therefore, if we write 
    $$ V = \bigcap_{j=1}^{K_{n}} V (j),$$
it follows that 
$$ P (V) \ge 1 - C (\log n)^{-\frac{7}{8}} \log \log n.$$


Using again the notation introduced in Definition \ref{convdef}, define the event $F(j)$ by
$$F (j) = \Big\{ |L_{j} - e_{n} \phi (n) | \le e_{n} \phi (n) (\log n)^{-\frac{1}{4}} \Big\}.$$
Setting $\epsilon = (\log n)^{-\frac{1}{4}}$ and applying \eqref{diffest}, we see that, for each $j$,
$$ P (F(j)) \ge 1 - C (\log n)^{-\frac{1}{4}} \log \log n,$$
holds. Here, $C > 0$ is a universal constant.
Therefore, if we write 
    $$ F = \bigcap_{j=1}^{K_{n}} F (j),$$
it follows that 
$$ P (F) \ge 1 - C (\log n)^{-\frac{1}{8}} \log \log n.$$

Define the event $W$ by
$$W = \Big\{ \max \big\{ | S (j) - S (k) | \ : \ 0 \le j \le k \le n, \ k-j \le e_{n} \big\} \le \sqrt{n} (\log n)^{- \frac{1}{32}} \Big\}.$$
By Lemma 1.5.1 of \cite{Lawb}, the probability that $W$ occurs satisfies
$$ P (W) \ge 1 - C e^{- c (\log n)^{-\frac{1}{32}}}. $$

By Lemma 3.1 of \cite{Lawcut}, a standard Brownian motion $B$ in $\mathbb{R}^{4}$ and a simple random walk $S$ on $\mathbb{Z}^{4}$, both starting from the origin, can be defined on the same probability space (that is, they can be coupled) so that
$$P \bigg( \max_{0 \le t \le 1} \bigg| B (t) - \frac{2 S \big( t n \big)}{ \sqrt{n}} \bigg| \ge n^{-\frac{1}{5}} \bigg) \le C e^{-n^{c}},$$
holds for universal constants $c, C > 0$.

We assume that $B$ and $S$ are coupled as above. Suppose that the event $V \cap F \cap W$ occurs. Then
    $$\max_{0 \le t \le 1} \bigg| X_{n} (t) - \frac{2 S ( t n)}{\sqrt{n}} \bigg| \le C  (\log n)^{-\frac{1}{32}},$$
    holds for some universal constant $C > 0$. Consequently, 
    $$\max_{0 \le t \le 1} \big| X_{n} (t) - B(t) \big| \le C  (\log n)^{-\frac{1}{32}},$$
    which yields the desired conclusion.
\end{proof}

\bibliographystyle{plain} 
\bibliography{4d}


\end{document}